 \def\cocoa{{\hbox{\rm C\kern-.13em o\kern-.07em C\kern-.13em o\kern-.15em A}}}
\newtheorem{theorem}{Theorem}[section]
\newtheorem{lemma}[theorem]{Lemma}
\newtheorem{proposition}[theorem]{Proposition}
\newtheorem{corollary}[theorem]{Corollary}
\theoremstyle{definition}
\newtheorem{remark}[theorem]{Remark}
\newtheorem{example}[theorem]{Example}
\newcommand {\Hom}{\mathcal{H}\kern -0.25ex{\mathit om}}
\newcommand {\Ext}{\mathcal{E}\kern -0.25ex{\mathit xt}}
\newcommand {\rk}{\mathrm{rk}}
\newcommand {\ext}{\mathrm{Ext}}
\newcommand {\Hilb}{\mathcal{H}\kern -0.25ex{\mathit ilb\/}}
\newcommand {\Cliff}{\mathrm{Cliff}}
\newcommand {\cK}{\mathcal{K}}
\newcommand {\cA}{\mathcal{A}}
\newcommand {\cB}{\mathcal{B}}
\newcommand {\bZ}{\mathbb{Z}}
\newcommand {\bP}{\mathbb{P}}
\newcommand {\bF}{\mathbb{F}}
\newcommand{\cU}{{\mathcal U}}
\newcommand{\cE}{{\mathcal E}}
\newcommand{\cF}{{\mathcal F}}
\newcommand{\cM}{{\mathcal M}}
\newcommand{\cO}{{\mathcal O}}
\newcommand{\cG}{{\mathcal G}}
\newcommand{\cI}{{\mathcal I}}
\newcommand{\Bl}{\operatorname{Bl}}
\newcommand{\Pic}{\operatorname{Pic}}
\def\p#1{{\bP^{#1}}}
\def\ga#1{{{\accent"12 #1}}}
\title[Special Ulrich bundles on non--special surfaces with $p_g=q=0$]{Special Ulrich bundles on \\ non--special surfaces with $p_g=q=0$}
\subjclass[2010]{Primary 14J60; Secondary 14J26, 14J27, 14J28, 14J29}
\keywords{Vector bundle, Ulrich bundle.}
\author[Gianfranco Casnati]{Gianfranco Casnati}
\thanks{The author is a member of GNSAGA group of INdAM and is supported by the framework of PRIN 2015 \lq Geometry of Algebraic Varieties\rq, cofinanced by MIUR}
\begin{document}

\begin{abstract}
Let $S$ be a surface with $p_g(S)=q(S)=0$ and endowed with a very ample line bundle $\mathcal O_S(h)$ such that $h^1\big(S,\mathcal O_S(h)\big)=0$. We show that $S$ supports special (often stable) Ulrich bundles of rank $2$, extending a recent result by A. Beauville. Moreover, we show that such an $S$ supports families of dimension $p$ of pairwise non--isomorphic, indecomposable, Ulrich bundles for arbitrary large $p$ except for very few cases. We also show that the same is true for linearly normal non--special surface in $\p4$ of degree at least $4$, Enriques surface and anticanonical rational surface.

\end{abstract}

\maketitle

\section{Introduction and Notation}
Throughout the whole paper we will work on an algebraically closed field $k$ of characteristic $0$ and $\p N$ will denote the projective space over $k$ of dimension $N$. The word surface will always denote a projective smooth connected surface.

Each variety $X\subseteq\p N$ is naturally endowed with the very ample line bundle $\cO_X(h):=\cO_{\p N}(1)\otimes\cO_X$. In order to understand the geometry of the embedded variety $X$, it could be helpful to deal with the vector bundles supported on $X$. From a cohomological point of view, the simplest bundles $\cF$ on $X$ are the ones which are {\sl Ulrich with respect to $\cO_X(h)$}, i.e. such that 
$$
h^i\big(X,\cF(-ih)\big)=h^j\big(X,\cF(-(j+1)h)\big)=0
$$
for each $i>0$ and $j<\dim(X)$. 

The existence of Ulrich bundles on a fixed polarised variety is a challenging problem raised by D. Eisenbud and F.O. Schreyer in \cite{E--S--W}. It is wide open, though many scattered results are known: without any claim of completeness we recall \cite{A--C--MR}, \cite{A--F--O}, \cite{Bea}, \cite{Bea1}, \cite{Bea2}, \cite{B--N},  \cite{C--H1}, \cite{C--H2}, \cite{C--K--M2}, \cite{PL--T}.

Now let $S\subseteq\p N$ be a surface and set $p_g(S)=h^2\big(S,\cO_S\big)$, $q(S)=h^1\big(S,\cO_S\big)$. It is of particular interest to understand the existence of Ulrich bundles of rank $2$ which are {\sl special} in the sense of \cite{E--S--W}, i.e. with first Chern class $3h+K_S$, $K_S$ being the canonical class on $S$. Notice that not all the Ulrich bundles are special (for some explicit examples see e.g. \cite{C--K--M1}, \cite{Cs} and \cite{C--G}).

The first result proved in this paper is the following. Recall that $S$ or $\cO_S(h)$ are called {\sl special} if $h^1\big(S,\cO_S(h)\big)\ne0$, {\sl non--special} otherwise. 

\begin{theorem}
\label{tExistence}
Let $S$ be a surface with $p_g(S)=q(S)=0$ and endowed with a very ample non--special line bundle $\cO_S(h)$.

For each general set $Z\subseteq S$ of $h^0\big(S,\cO_S(h)\big)+1$ points, there is a special Ulrich bundle $\cE$ with respect to $\cO_S(h)$ of rank $2$ fitting into the exact sequence
\begin{equation}
\label{seqUlrich}
0\longrightarrow\cO_S(h+K_S)\longrightarrow\cE\longrightarrow\cI_{Z\vert S}(2h)\longrightarrow0.
\end{equation}
\end{theorem}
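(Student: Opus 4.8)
The plan is to realise $\cE$ by a Serre--type extension and then to check directly that it is Ulrich; the hypotheses $p_g(S)=q(S)=0$ and $h^1\big(S,\cO_S(h)\big)=0$ enter only at a few decisive points. Write $n:=h^0\big(S,\cO_S(h)\big)$, so $Z$ consists of $n+1$ points. Since $\cO_S(h)$ is very ample it is globally generated, hence (a standard fact) a general set of at most $n$ points of $S$ imposes independent conditions on $\vert h\vert$; thus for $Z$ general every proper subset $Z'\subsetneq Z$ does so as well, i.e. $h^0\big(S,\cI_{Z'\vert S}(h)\big)=n-\vert Z'\vert$, and in particular $h^0\big(S,\cI_{Z\vert S}(h)\big)=0$, so no member of $\vert h\vert$ contains $Z$.

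To construct $\cE$, tensor \eqref{seqUlrich} by $\cO_S(-h-K_S)$: this turns it into the Serre triple $0\to\cO_S\to\cE(-h-K_S)\to\cI_{Z\vert S}(h-K_S)\to0$, so by the Serre correspondence a locally free $\cE$ fitting in \eqref{seqUlrich} exists provided $Z$ satisfies Cayley--Bacharach with respect to $\omega_S\otimes\cO_S(2h)\otimes\cO_S(-h-K_S)=\cO_S(h)$; this holds vacuously, since for every $Z'\subsetneq Z$ with $\vert Z'\vert=n$ no nonzero section of $\cO_S(h)$ vanishes along $Z'$. Unwinding this: by the local--to--global spectral sequence, $\Hom\big(\cI_{Z\vert S}(2h),\cO_S(h+K_S)\big)=\cO_S(K_S-h)$ and $h^1\big(S,\cO_S(K_S-h)\big)=h^1\big(S,\cO_S(h)\big)=0$ (this is where non--speciality is used), so $\ext^1\big(\cI_{Z\vert S}(2h),\cO_S(h+K_S)\big)$ is identified with the kernel of the transpose of the evaluation $H^0\big(S,\cO_S(h)\big)\to k^{n+1}$ at the points of $Z$, which is one--dimensional because that evaluation is injective; moreover, since every subset of $Z$ of cardinality $n$ also imposes independent conditions, the generator of this kernel is nonzero at each point of $Z$, so the corresponding extension is a vector bundle $\cE$ as in \eqref{seqUlrich}. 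From the sequence, $\rk\cE=2$ and $c_1(\cE)=(h+K_S)+2h=3h+K_S$, so $\cE$ is special; it remains to prove $H^q\big(S,\cE(-h)\big)=H^q\big(S,\cE(-2h)\big)=0$ for $q=0,1,2$.

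For this, twist \eqref{seqUlrich} by $-h$ and by $-2h$, obtaining $0\to\cO_S(K_S)\to\cE(-h)\to\cI_{Z\vert S}(h)\to0$ and $0\to\cO_S(K_S-h)\to\cE(-2h)\to\cI_{Z\vert S}\to0$. The $H^0$'s vanish immediately: $h^0\big(S,\cO_S(K_S)\big)=p_g(S)=0$, $h^0\big(S,\cO_S(K_S-h)\big)=0$ because $\cO_S(h)$ is effective and $p_g(S)=0$, $h^0\big(S,\cI_{Z\vert S}(h)\big)=0$ by general position, and $h^0\big(S,\cI_{Z\vert S}\big)=0$. The $H^2$'s then come for free: $\cE$ has rank $2$, so $\cE^\vee\cong\cE\otimes\cO_S(-3h-K_S)$, whence Serre duality gives $h^2\big(S,\cE(-h)\big)=h^0\big(S,\cE(-2h)\big)=0$ and $h^2\big(S,\cE(-2h)\big)=h^0\big(S,\cE(-h)\big)=0$. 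Finally, for the $H^1$'s it suffices to have $\chi\big(\cE(-h)\big)=\chi\big(\cE(-2h)\big)=0$: from $\chi\big(\cO_S(K_S)\big)=\chi(\cO_S)=1$, $\chi\big(\cO_S(K_S-h)\big)=\chi\big(\cO_S(h)\big)=n$ (using $h^1\big(S,\cO_S(h)\big)=h^2\big(S,\cO_S(h)\big)=0$), $\chi\big(\cI_{Z\vert S}(h)\big)=n-(n+1)=-1$ and $\chi\big(\cI_{Z\vert S}\big)=1-(n+1)=-n$, the two twisted sequences give $\chi\big(\cE(-h)\big)=1-1=0$ and $\chi\big(\cE(-2h)\big)=n-n=0$, hence $h^1\big(S,\cE(-h)\big)=h^1\big(S,\cE(-2h)\big)=0$. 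Combining the three kinds of vanishing shows $\cE$ is a special Ulrich bundle of rank $2$.

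The genuinely substantive step is the construction of $\cE$: once $\cE$ exists the cohomology is routine, the identity $\cE^\vee\cong\cE\otimes\cO_S(-3h-K_S)$ making the $H^2$'s automatic. The construction hinges on the Cayley--Bacharach condition, and it is exactly non--speciality of $\cO_S(h)$ together with $p_g(S)=q(S)=0$ that both makes the relevant $\ext^1$ one--dimensional and forces $\chi\big(\cO_S(h)\big)=n=h^0\big(S,\cO_S(h)\big)$, which is what collapses the Euler characteristics above to $0$. One should also take some care over the assertion that a \emph{general} set of $n+1$ points already forces every subset of cardinality $n$ to impose independent conditions on $\vert h\vert$: this is where global generation of the very ample $\cO_S(h)$ is used, and it is what makes the Cayley--Bacharach condition vacuous and the relevant evaluation maps injective.
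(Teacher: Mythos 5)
Your proposal is correct and follows essentially the same route as the paper: general position of the $h^0\big(S,\cO_S(h)\big)+1$ points makes the Cayley--Bacharach condition for $\cO_S(h)$ vacuous, the Hartshorne--Serre correspondence then produces $\cE$, and the Ulrich conditions follow from the same vanishings. The only cosmetic differences are that you unwind explicitly the extension--group computation that the paper delegates to Theorem 5.1.1 of Huybrechts--Lehn, and you verify $h^i\big(S,\cE(-h)\big)=h^i\big(S,\cE(-2h)\big)=0$ directly via Euler characteristics rather than through the paper's Corollary \ref{cUlrichSpecial}.
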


A similar result has been proved by A. Beauville with the slightly stronger assumption that $\vert h-K_S\vert$ contains an irreducible curve instead of the vanishing $h^1\big(S,\cO_S(h)\big)=0$.

Notice that the bundle in the above theorem could be a direct sum of line bundles: e.g. if  $S:=\p2$ and $\cO_S(h):=\cO_{\p2}(1)$, then $\cE\cong\cO_{\p2}^{\oplus2}$ in Sequence \eqref{seqUlrich}. Thus, it is  interesting to understand when the construction above returns an {\sl indecomposable} bundle, i.e. a bundle which is not the direct sum of bundles of lower degree.

A condition forcing the indecomposability of $\cE$ is its stability. Recall that an Ulrich bundle $\cF$ on the surface $S$ endowed with the very ample line bundle $\cO_S(h)$ is called {\sl stable} if  $c_1(\cG)h/\rk(\cG)<c_1(\cF)h/\rk(\cF)$ for each proper subbundle $\cG\subseteq\cF$ (see Section \ref{sStability} for further comments and result on this notion). It is not clear a priori whether the bundles constructed in Theorem \ref{tExistence} are stable. 
In Section \ref{sStability} we prove the following result.

\begin{theorem}
\label{tStable}
Let $S$ be a surface with $p_g(S)=q(S)=0$ and endowed with a very ample non--special line bundle $\cO_S(h)$.

Let $\cE$ be the bundle constructed in Theorem \ref{tExistence} from a set $Z\subseteq S$ of $h^0\big(S,\cO_S(h)\big)+1$ points:
\begin{enumerate}
\item $\cE$ is never stable for each choice of $Z$ if either $\cO_S(h)$ embeds $S$ as a rational scroll or $S\cong\p2$ and $\cO_S(h)\cong\cO_{\p2}(1)$.
\item $\cE$ is stable for a general choice of $Z$ in all the remaining cases.
\end{enumerate}
\end{theorem}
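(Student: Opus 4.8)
The plan is to translate stability into a statement about effective divisors of small $h$-degree on $S$. Write $g$ for the genus of a general smooth $C\in\vert h\vert$, so that $h^2+hK_S=2g-2$ by adjunction; then $c_1(\cE)h=3h^2+hK_S=2h^2+(2g-2)$ is even, the slope $\mu(\cE):=c_1(\cE)h/\rk(\cE)$ equals $h^2+g-1\in\bZ$, and $\cE$ is stable if and only if $\cL h\le h^2+g-2$ for every sub-line-bundle $\cL\subseteq\cE$. Riemann--Roch on $S$ together with $\chi(\cO_S)=1$, the hypothesis $h^1(\cO_S(h))=0$ and the vanishing $h^2(\cO_S(h))=h^0(\cO_S(K_S-h))=0$ (an effective $K_S-h$ would give $hK_S\ge h^2$, hence $h^0(\cO_S(h))=\chi(\cO_S(h))\le1$, impossible since $h$ is very ample) yield $h^0(\cO_S(h))=h^2-g+2$; in particular $\vert Z\vert=h^2-g+3$ and $g\le h^2-1$. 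Now let $\cL\subseteq\cE$ be destabilising, i.e.\ $\cL h\ge h^2+g-1$, and compose $\cL\hookrightarrow\cE$ with $\cE\to\cI_{Z\vert S}(2h)$ from \eqref{seqUlrich}. If the composite vanishes then $\cL\hookrightarrow\cO_S(h+K_S)$, so $\cL h\le(h+K_S)h=2g-2$, forcing $g\ge h^2+1$, a contradiction. Hence the composite is non-zero, so injective, and thus $\cL=\cO_S(2h-D)$ for an effective divisor $D$ containing $Z$; the inequality $\cL h\ge h^2+g-1$ now reads $Dh\le h^2-g+1=:n$, and $n=\vert Z\vert-2$. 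Therefore $\cE$ is \emph{not} stable only if $Z$ is contained in an effective divisor of $h$-degree $\le n$; conversely this condition suffices once the resulting inclusion $\cO_S(2h-D)\hookrightarrow\cI_{Z\vert S}(2h)$ lifts to $\cE$, the obstruction lying in $\ext^1_S(\cO_S(2h-D),\cO_S(h+K_S))$.

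For part (2) I only need the ``only if'' direction: it suffices to show that, outside the two listed situations, a general set $Z$ of $n+2$ points lies on no effective divisor $D$ with $Dh\le n$. Since a general set of $n+2$ points lies on a member of a linear system $\vert D\vert$ only when $h^0(\cO_S(D))\ge n+3$, I must bound $h^0(\cO_S(D))\le n+2$ for every effective $D$ with $Dh\le n$. Replacing $D$ by the moving part of $\vert D\vert$ (which does not increase $Dh$ and leaves $h^0$ unchanged) I may assume $\vert D\vert$ has no fixed component, so $D^2\ge0$. If the rational map attached to $\vert D\vert$ has image of dimension $\le1$ then, since $q(S)=0$, either $h^0(\cO_S(D))=1$ or $\vert D\vert$ is composed with a rational pencil, a general member of $\vert D\vert$ is then a sum of $a\ge1$ curves each of positive $h$-degree, so $a\le Dh\le n$ and $h^0(\cO_S(D))\le a+1\le n+1$. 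If the image is a surface, it is a nondegenerate irreducible surface in $\p{h^0(\cO_S(D))-1}$, hence of degree at least $h^0(\cO_S(D))-2$, so $D^2\ge h^0(\cO_S(D))-2$; by the Hodge index theorem $D^2\le(Dh)^2/h^2\le n^2/h^2$, and when $g\ge1$ one has $n\le h^2$, whence $D^2\le n$ and $h^0(\cO_S(D))\le n+2$. This settles (2) for $g\ge1$. When $g=0$ instead, $C\cong\p1$ and $C\cdot K_S=-h^2-2<0$, so $K_S$ is not pseudo-effective and $S$ is rational; the classification of polarised surfaces with rational hyperplane sections then forces $(S,\cO_S(h))$ to be $(\p2,\cO_{\p2}(1))$, a rational scroll, or $(\p2,\cO_{\p2}(2))$. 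The first two are the exceptional cases, and in the remaining case $(\p2,\cO_{\p2}(2))$ one has $n=5$, while an effective divisor $D\subseteq\p2$ with $2\deg(D)=Dh\le5$ satisfies $\deg(D)\le2$, hence $h^0(\cO_S(D))\le6\le n+2$; so a general set of $7$ points lies on no such $D$ and $\cE$ is stable.

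For part (1) I will produce, in each exceptional case and for every $Z$, a sub-line-bundle of $\cE$ of slope exactly $\mu(\cE)$, which prevents stability. If $S\cong\p2$ and $\cO_S(h)\cong\cO_{\p2}(1)$, then $n=2$ and $\vert Z\vert=4$; the $5$-dimensional linear system $\vert\cO_{\p2}(2)\vert$ contains a conic $D$ through $Z$, giving $\cO_{\p2}=\cO_S(2h-D)\hookrightarrow\cI_{Z\vert S}(2h)$, which lifts to $\cE$ because $\ext^1_{\p2}(\cO_{\p2},\cO_{\p2}(h+K_S))=H^1(\cO_{\p2}(-2))=0$, and here $\cL=\cO_{\p2}$ with $\cL h=0=\mu(\cE)$. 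If $\cO_S(h)$ embeds $S$ as a rational scroll, then $S\cong\bF_e$, the ruling class $f$ has $fh=1$, and $g=0$ gives $hK_S=-h^2-2$, $n=h^2+1$, $\vert Z\vert=h^2+3$; Riemann--Roch yields $\chi(\cO_S(h+f))=h^2+4=\vert Z\vert+1$ and $h^2(\cO_S(h+f))=0$, so some $D\in\vert h+f\vert$ contains $Z$, giving $\cO_S(h-f)=\cO_S(2h-D)\hookrightarrow\cI_{Z\vert S}(2h)$. This lifts to $\cE$ since $\ext^1_S(\cO_S(h-f),\cO_S(h+K_S))=H^1(\cO_S(K_S+f))\cong H^1(\cO_S(-f))^{\vee}=0$ (from $0\to\cO_S(-f)\to\cO_S\to\cO_f\to0$ and $h^1(\cO_S)=0$), and $\cL=\cO_S(h-f)$ has $\cL h=h^2-1=\mu(\cE)$. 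Hence $\cE$ is never stable in either case. I expect the main obstacle to be the linear--system estimate $h^0(\cO_S(D))\le n+2$ in part (2) --- in particular handling the pencil subcase and invoking, when $g=0$, the classification of surfaces with rational hyperplane sections --- together with, in part (1), the explicit identification of the destabilising $D$ and the verification that the relevant $\ext^1$ vanishes.
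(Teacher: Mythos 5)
Your proof is correct, but it takes a genuinely different route from the paper's at both key points. For part (2) the paper first invokes the automatic semistability of Ulrich bundles (Theorem \ref{tUnstable}) to reduce to an \emph{Ulrich} line subbundle $\cO_S(D)\subseteq\cE$, and then exploits the Ulrich machinery: maximal generation of the complementary Ulrich line bundle gives $h^0\big(S,\cO_S(3h+K_S-D)\big)=h^2$, and combining this with the finite--fibre addition map $\vert 2h-D\vert\times\vert h+K_S\vert\to\vert 3h+K_S-D\vert$ and Kodaira vanishing for $h^0\big(S,\cO_S(h+K_S)\big)=\pi(\cO_S(h))$ yields $h^0\big(S,\cO_S(2h-D)\big)\le\deg(Z)-2$. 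You never use the Ulrich property of the destabilising subsheaf: you instead bound $h^0\big(S,\cO_S(D)\big)\le n+2$ for \emph{every} effective divisor with $Dh\le n$, via the dichotomy ``composed with a (rational, since $q(S)=0$) pencil / generically finite onto a surface'', the minimal--degree bound for nondegenerate surfaces and the Hodge index inequality --- with $n\le h^2$, i.e.\ $\pi(\cO_S(h))\ge1$, entering exactly at the Hodge index step. For part (1) the paper outsources the work (Horrocks for $(\p2,\cO_{\p2}(1))$, the Corollary to Theorem B of \cite{F--M} for scrolls, and \cite{Lin} for the uniqueness of $\Omega^1_{\p2}(3)$ on $(\p2,\cO_{\p2}(2))$), whereas you exhibit explicit equal--slope sub--line--bundles ($\cO_{\p2}$, resp.\ $\cO_S(h-f)$) by lifting through Sequence \eqref{seqUlrich} after checking that the relevant $\ext^1$ vanishes; this is self--contained and also disposes of $(\p2,\cO_{\p2}(2))$ by the same point--counting as the general case. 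The one step you should make explicit is that the effective classes $D$ with $0<Dh\le n$ form a \emph{finite} subset of $\Pic(S)$ (boundedness of curves of bounded degree in $\p{N}$, together with $\Pic^0(S)=0$), so that a single general $Z$ avoids all the corresponding incidence loci simultaneously; the paper's own proof needs the analogous remark for its finitely many Ulrich line bundle classes, so this is a presentational point rather than a gap.
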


In \cite{F--PL}, D. Faenzi and J. Pons--Llopis also discussed about the existence of Ulrich bundles of higher rank on a variety $X$ as a sign of the complexity of the variety itself. For example one could ask if $X$ is of {\sl Ulrich--wild representation type}, i.e. if it supports families of dimension $p$ of pairwise non--isomorphic, indecomposable, Ulrich bundles for arbitrary large $p$. 

Ulrich--wildness (with respect to a suitable line bundle) is known for each anticanonically embedded surface of degree at least $3$ (see e.g. \cite{C--H2}, \cite{C--K--M1}, \cite{MR--PL2}, \cite{PL--T}, \cite{Cs}), all the Segre products but $\p1\times\p1$ (see \cite{C--MR--PL}), determinantal varieties (see \cite{K--MR}), quartic surfaces in $\p3$ and general surfaces in $\p3$ of degree $d=5,6,7,8,9$ (see \cite{Cs1}), rational surfaces $S\subseteq\p4$ which are {\sl aCM}, i.e. projectively normal and such that $h^1\big(S,\cO_S(th)\big)=0$ for each $t\in\bZ$ (see \cite{MR--PL1}).

Let 
$$
\pi(\cO_S(h)):=\frac{h^2+hK_S}2+1
$$
be the {\sl sectional genus} of the polarised surface $S$. The third theorem proved in this paper is the following.

\begin{theorem}
\label{tWild}
Let $S$ be a surface with $p_g(S)=q(S)=0$ and endowed with a very ample non--special line bundle $\cO_S(h)$.

Then $S$ is Ulrich--wild if and only if either $\pi(\cO_S(h))\ge1$, or $\pi(\cO_S(h))=0$ and $h^2\ge5$.
\end{theorem}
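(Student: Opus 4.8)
The plan is to prove the two implications separately, relying on two imported ingredients: (i) an extension of Ulrich bundles is again Ulrich, the defining cohomology vanishings being additive along short exact sequences; and (ii) the standard wildness criterion, that if $\cE_1,\cE_2$ are \emph{simple} Ulrich bundles on $S$ with $\mathrm{Hom}(\cE_1,\cE_2)=\mathrm{Hom}(\cE_2,\cE_1)=0$ and $\dim\ext^1(\cE_1,\cE_2)\ge3$, then $S$ is Ulrich--wild. Indeed the functor carrying a representation of the generalised Kronecker quiver with $\dim\ext^1(\cE_1,\cE_2)$ arrows to the corresponding extension of a direct sum of copies of $\cE_1$ by one of $\cE_2$ is a fully faithful functor preserving indecomposability and isomorphism classes, its image consists of Ulrich bundles by (i), and the Kronecker quiver with at least three arrows is of wild representation type. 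Hence the whole ``if'' part comes down to exhibiting such a pair $\cE_1,\cE_2$, and the ``only if'' part to recognising the exceptional surfaces.

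Assume $\pi(\cO_S(h))\ge1$ and that $(S,\cO_S(h))$ is neither a rational scroll nor $(\p2,\cO_{\p2}(1))$. Every special rank $2$ Ulrich bundle has $c_1=3h+K_S$ and the same $c_2$ (determined by the Ulrich relation $\chi(\cE)=2h^2$), hence the same Chern character, and Riemann--Roch (with $\chi(\cO_S)=1$) gives for any two of them $\cE_1,\cE_2$
\[
\chi(\cE_1,\cE_2)=\chi(\mathrm{End}\,\cE_1)=K_S^2-h^2-4 .
\]
I first claim $h^2\ge K_S^2$ here. When $hK_S\le0$ this follows from the Hodge index inequality $K_S^2\,h^2\le(hK_S)^2$ together with $|hK_S|\le h^2$, itself a consequence of $\pi(\cO_S(h))\ge1$. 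When $hK_S>0$, a brief look at the Enriques--Kodaira classification shows $S$ is neither Enriques nor rational with $-K_S$ effective, so $K_S^2\le9$, and moreover $K_S^2\le0$ unless $S$ is of general type, in which case very ampleness of $\cO_S(h)$ together with $p_g=q=0$ forces $h^2$ far above $9$; either way $h^2\ge K_S^2$. Now by Theorems \ref{tExistence} and \ref{tStable} the moduli space of stable sheaves with the above invariants is non--empty, and it has local dimension $\ge1-\chi(\mathrm{End}\,\cE_1)=h^2-K_S^2+5\ge5$ at each stable point; so we may choose non--isomorphic stable special Ulrich bundles $\cE_1\ncong\cE_2$. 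Being simple and of equal slope they satisfy $\mathrm{Hom}(\cE_1,\cE_2)=\mathrm{Hom}(\cE_2,\cE_1)=0$, while $\dim\ext^1(\cE_1,\cE_2)\ge-\chi(\cE_1,\cE_2)=h^2-K_S^2+4\ge4$; by the criterion, $S$ is Ulrich--wild. (If instead $\pi(\cO_S(h))=0$ and $S$ is neither a scroll nor $(\p2,\cO_{\p2}(1))$, then $(S,\cO_S(h))\cong(\p2,\cO_{\p2}(2))$, the Veronese surface, treated in the ``only if'' part.)

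There remains a rational scroll $S\cong\bF_e$ with $\cO_S(h)\cong\cO_S(C_0+bf)$, where $C_0^2=-e$, $f$ is a fibre of the ruling $\bF_e\to\p1$ and $d:=h^2=2b-e\ge5$; here the bundle of Theorem \ref{tExistence} is never stable, so instead one uses Ulrich line bundles. A direct cohomology computation on $\bF_e$ (splitting off the ruling map, or plain Riemann--Roch for line bundles) shows that $\mathcal{L}_0:=\cO_S\big((d-1)f\big)$ and $\mathcal{L}_1:=\cO_S\big(C_0+(b-1)f\big)$ are Ulrich, that $\mathrm{Hom}(\mathcal{L}_0,\mathcal{L}_1)=\mathrm{Hom}(\mathcal{L}_1,\mathcal{L}_0)=0$ (both $\mathcal{L}_1-\mathcal{L}_0$ and its opposite being non--effective), and that $\dim\ext^1(\mathcal{L}_0,\mathcal{L}_1)=h^1\big(\cO_S(C_0-\tfrac{d-e}{2}f)\big)=d-2\ge3$. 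By the criterion, $S$ is Ulrich--wild, which finishes the ``if'' direction.

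For the ``only if'' direction, $\pi(\cO_S(h))=0$ forces $(S,\cO_S(h))$ to be a surface of minimal degree, and if in addition $h^2\le4$ it is one of $(\p2,\cO_{\p2}(1))$, the smooth quadric surface, the cubic scroll, the two quartic scrolls, or the Veronese surface $(\p2,\cO_{\p2}(2))$. In each case the homogeneous coordinate ring is Cohen--Macaulay of finite (for degree $\le3$) or tame (for degree $4$) representation type, by the classification of Eisenbud--Herzog and the ensuing description of the Cohen--Macaulay representation type of varieties of minimal degree; a fortiori $S$ carries no family of dimension $p$ of pairwise non--isomorphic indecomposable Ulrich bundles with $p$ arbitrarily large, so it is not Ulrich--wild. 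The single delicate point of the whole proof is the uniform inequality $h^2\ge K_S^2$ used in the second paragraph — the step where the Hodge index theorem and the Enriques--Kodaira classification genuinely enter; everything else is either the construction of Theorems \ref{tExistence}--\ref{tStable}, an explicit line-bundle calculation on a Hirzebruch surface, or the representation-type input for the finitely many exceptional surfaces.
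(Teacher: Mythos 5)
Your proof is essentially correct and rests on the same engine as the paper's (the Faenzi--Pons-Llopis criterion applied to a pair of simple Ulrich bundles with vanishing $\mathrm{Hom}$'s and $\dim\mathrm{Ext}^1\ge3$, fed by the stable special bundles of Theorems \ref{tExistence}--\ref{tStable} and the Riemann--Roch count $-\chi(\cE_1\otimes\cE_2^\vee)=h^2-K_S^2+4$), but it diverges in two places. First, where the paper obtains the needed inequality $h^2+1\ge K_S^2$ from classification input --- Ionescu's lists for $h^2\le7$ and the Enriques--Kodaira classification for $h^2\ge8$ --- you derive the slightly stronger $h^2\ge K_S^2$ from the Hodge index theorem when $hK_S\le0$; this is cleaner and essentially classification-free, and in fact the same computation also settles the case $hK_S>0$, since $hK_S=h^2-2N\le h^2-4$ (Remark \ref{rh^2}) gives $K_S^2\le (hK_S)^2/h^2<h^2$ outright. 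Your written treatment of that second case (``very ampleness forces $h^2$ far above $9$'') is an unproved assertion and is the one soft spot of the argument; it should be replaced by the Hodge index computation just indicated. Second, for scrolls of degree $d\ge5$ the paper simply cites Mir\'o-Roig's result on the representation type of rational normal scrolls, whereas you exhibit the explicit Ulrich line bundles $\cO_S((d-1)f)$ and $\cO_S(\xi+(b-1)f)$, check them against Corollary \ref{cUlrichLine}, and compute $\dim\mathrm{Ext}^1=d-2\ge3$ by Riemann--Roch; this makes the ``if'' direction self-contained, and the computation is correct. The ``only if'' direction is handled the same way in both proofs, by quoting the finite or tame representation type of the minimal-degree surfaces with $h^2\le4$.
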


As a first by--product of the above theorem we are able to extend the aforementioned results from \cite{MR--PL1} to each non--special surface $S\subseteq\p4$, which is {\sl linearly normal}, i.e. such that $h^0\big(S,\cO_S(h)\big)=5$ according to the definition in \cite{Al}. 

As a second consequence it also allows us to prove that every Enriques surface $S$ is Ulrich--wild with respect to each very ample line bundle, thus extending a result proved in \cite{B--N} under quite restrictive hypothesis on $S$.

In \cite{Kim}, the author deals with the existence of Ulrich bundles on a rational surface $S$ which is {\sl anticanonical}, i.e. such that $\vert-K_S\vert\ne\emptyset$. Using a construction due to Lazarsfeld and Mukai, the author proves in Theorem 1 of \cite{Kim} that $S$ supports an irreducible family of dimension $h^2-K_S^2+5$  of rank $2$ stable special Ulrich bundles under a rather technical condition on the curves in $\vert 3h+K_S\vert$. 

We will see in Section \ref{sAnticanonical} that such a technical condition actually forces the inequality $\pi(\cO_S(h))\ge1$. Thus, thanks to Theorems \ref{tExistence} and \ref{tStable}, the aforementioned Theorem 1 of \cite{Kim} can be generalised as follows.

\begin{theorem}
\label{tAnticanonical}
Let $S$ be an anticanonical rational surface endowed with a very ample line bundle $\cO_S(h)$. 

Then $\cO_S(h)$ is non--special and $S$ supports special Ulrich bundles of rank $2$. If $\pi(\cO_S(h))\ge1$, then $S$ supports stable special Ulrich bundles of rank $2$: their moduli space is irreducible rational and smooth of dimension $h^2-K_S^2+5$. 
\end{theorem}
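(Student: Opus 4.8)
The plan is to reduce the statement to Theorems~\ref{tExistence} and~\ref{tStable}, so the first job is to check their hypotheses. Since $S$ is rational we have $p_g(S)=q(S)=0$, and it remains only to see that $\cO_S(h)$ is non--special. Fix a non--zero effective divisor $C\in\vert-K_S\vert$ (it exists because $S$ is anticanonical and $K_S\ne0$). Tensoring $0\to\cO_S(K_S)\to\cO_S\to\cO_C\to0$ by $\cO_S(h)$ and using Kodaira vanishing for the ample divisor $h$, which gives $h^1\bigl(S,\cO_S(h+K_S)\bigr)=0$, we obtain $h^1\bigl(S,\cO_S(h)\bigr)\le h^1\bigl(C,\cO_C(h)\bigr)$. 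Now $C$ is a connected Gorenstein curve of arithmetic genus $1$ with $\omega_C\cong\cO_C$ (adjunction, as $C\sim-K_S$) and $\deg\cO_C(h)=-hK_S>0$, so Serre duality on $C$ gives $h^1\bigl(C,\cO_C(h)\bigr)=h^0\bigl(C,\cO_C(-h)\bigr)=0$. Hence $\cO_S(h)$ is non--special, and Theorem~\ref{tExistence} produces special Ulrich bundles of rank $2$ on $S$ sitting in sequences~\eqref{seqUlrich}.

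Assume from now on that $\pi(\cO_S(h))\ge1$. By Theorem~\ref{tStable}, the bundle $\cE$ of~\eqref{seqUlrich} is stable for a general $Z$ unless $\cO_S(h)$ embeds $S$ as a rational scroll or $(S,\cO_S(h))\cong(\p2,\cO_{\p2}(1))$. Both exceptional cases have sectional genus $0$: one has $\pi(\cO_{\p2}(1))=0$ directly, while for a scroll, writing $S\cong\mathbb F_e$ and $h=C_0+bf$ with $h\cdot f=1$, a quick computation gives $h^2+hK_S=-2$. Hence $\pi(\cO_S(h))\ge1$ forces case~(2) of Theorem~\ref{tStable}, so stable special Ulrich bundles of rank $2$ exist on $S$.

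Let $M$ be the moduli space of stable special Ulrich bundles of rank $2$ on $(S,\cO_S(h))$. Its Chern classes are constant: $c_1=3h+K_S$ is fixed, and the Ulrich property forces $\chi(\cE)=2h^2$, whence $c_2(\cE)=\frac12(5h^2+3hK_S)+2$ by Riemann--Roch. I would prove smoothness of $M$ by showing $\ext^2(\cE,\cE)=0$ for every $[\cE]\in M$: by Serre duality $\ext^2(\cE,\cE)\cong\mathrm{Hom}\bigl(\cE,\cE(K_S)\bigr)^\vee$, and tensoring $0\to\cO_S(K_S)\to\cO_S\to\cO_C\to0$ by the locally free sheaf $\cE^\vee\otimes\cE$ embeds $\mathrm{Hom}\bigl(\cE,\cE(K_S)\bigr)$ into $\mathrm{Hom}(\cE,\cE)=k\cdot\mathrm{id}_\cE$ (stability); since $\mathrm{id}_\cE$ restricts to the non--zero $\mathrm{id}_{\cE\vert_C}$, the subspace is $0$, giving $\ext^2(\cE,\cE)=0$. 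Therefore $M$ is smooth at $[\cE]$ of dimension $\ext^1(\cE,\cE)=4c_2-c_1^2-3\chi(\cO_S)=h^2-K_S^2+5$ by Hirzebruch--Riemann--Roch. For irreducibility I would use the Serre correspondence underlying Theorem~\ref{tExistence}: every $[\cE]\in M$ fits in a sequence~\eqref{seqUlrich} for some general $Z\subseteq S$ of length $n:=h^0(\cO_S(h))+1$, and for such $Z$ one computes $\ext^1\bigl(\cI_{Z\vert S}(2h),\cO_S(h+K_S)\bigr)=1$, so $\cE$ is the unique non--split extension; hence $M$ is the image of a dominant rational map from a dense open subset of the Hilbert scheme $S^{[n]}$, which is irreducible. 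As $S$ is rational, so is $S^{[n]}$, and thus $M$ is at least unirational.

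The genuinely hard part is to upgrade unirationality to rationality: a dominant map from a rational variety only gives unirationality, and for $\dim M\ge3$ that is not enough. Here I would follow the Lazarsfeld--Mukai construction of~\cite{Kim} — relating, as in Section~\ref{sAnticanonical}, the technical hypothesis of Theorem~1 of~\cite{Kim} to the inequality $\pi(\cO_S(h))\ge1$, and, should that hypothesis be strictly stronger, running an analogous construction directly under $\pi\ge1$ — to realise $M$ birationally as a projective bundle over a rational base built from $S$ and the linear system $\vert3h+K_S\vert$. The delicate point is to show that the Ulrich condition pins down the auxiliary line bundle on the chosen curve of $\vert3h+K_S\vert$ tightly enough that the parameter space carries no abelian--variety directions and is in fact rational; this is the main obstacle. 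Once it is cleared, combining with the smoothness, dimension and irreducibility established above yields the theorem.
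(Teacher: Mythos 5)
Your reduction to Theorems \ref{tExistence} and \ref{tStable} is sound and is exactly the paper's route for the first two assertions: non--speciality is obtained by restricting to $A\in\vert-K_S\vert$, using $\omega_A\cong\cO_A$, Kodaira vanishing for $h^1\big(S,\cO_S(h+K_S)\big)$, and the vanishing $h^0\big(A,\cO_A(-h)\big)=0$ (the paper deduces this componentwise from ampleness of $h$, since $A$ may be reducible or non--reduced; your degree argument needs that same componentwise remark but amounts to the same thing). The elimination of the two non--stable cases of Theorem \ref{tStable} via $\pi(\cO_S(h))=0$ is also correct.

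The genuine gap is in the moduli--space statement, and you have named it yourself: rationality is not proved, and your plan to extract it from a Lazarsfeld--Mukai-type projective--bundle structure is left as an unresolved ``main obstacle''. The paper does not attempt any of this: it cites Proposition 3.11 of \cite{C--MR}, which supplies irreducibility, rationality and smoothness of the moduli space of stable rank $2$ bundles with these Chern classes on an anticanonical rational surface in one stroke. Your smoothness argument ($\ext^2_S(\cE,\cE)\cong\mathrm{Hom}\big(\cE,\cE(K_S)\big)^\vee=0$ via restriction to $A$ and simplicity) and the dimension count $4c_2-c_1^2-3\chi(\cO_S)=h^2-K_S^2+5$ are correct, but your irreducibility argument also has a hole: it presupposes that \emph{every} stable special Ulrich bundle $\cE$ sits in Sequence \eqref{seqUlrich}, i.e. that $h^0\big(S,\cE(-h-K_S)\big)\ne0$ with zero locus a set of points in general position. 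The paper explicitly flags, in the Remark following Corollary \ref{cDecomposable}, that this is not known; and Riemann--Roch gives $\chi(\cE(-h-K_S))=K_S^2-hK_S$, which need not be positive for a general anticanonical rational surface, so the required section is not automatic. Without importing \cite{C--MR} (or supplying a genuinely new argument), the last sentence of the theorem remains unproved.
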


In Section \ref{sGeneral} we list some general results on Ulrich bundles on polarised surfaces. In Section \ref{sExistence} we prove Theorem \ref{tExistence}. In Section \ref{sStability} we first recall some easy facts about the stability of Ulrich bundles, giving finally the proof of Theorem \ref{tStable}. In Section \ref{sWild} we prove Theorem \ref{tWild} and we show as by--products that both Enriques surfaces, and linearly normal non special rational surfaces in $\p4$ of degree at least $4$ are Ulrich--wild. Finally, in Section \ref{sAnticanonical} we prove Theorem \ref{tAnticanonical}.

\section{General results}
\label{sGeneral}
In general, an Ulrich bundle $\cF$ on a smooth variety $X\subseteq\p N$ collects many interesting properties (see Section 2 of \cite{E--S--W}). The following ones are particularly important.
\begin{itemize}
\item $\cF$ is globally generated and its direct summands are Ulrich as well. 
\item $\cF$ is {\sl initialized}, i.e. $h^0\big(X,\cF(-h)\big)=0$ and $h^0\big(X,\cF\big)\ne0$.
\item $\cF$ is {\sl aCM}, i.e. $h^i\big(X,\cF(th)\big)=0$ for each $i=1,\dots,\dim(X)-1$ and $t\in \bZ$.
\item $\cF$ is {\sl maximally generated}, i.e. $h^0\big(X,\cF(-h)\big)$ attains the maximal possible value for an aCM bundle on $X$, i.e. $\rk(\cF)h^n$.
\end{itemize}

Now we turn our attention to a surface $S$. The Serre duality for $\cF$ is
$$
h^i\big(S,\cF\big)=h^{2-i}\big(S,\cF^\vee(K_S)\big),\qquad i=0,1,2,
$$
and the Riemann--Roch theorem is
\begin{equation}
\label{RRGeneral}
h^0\big(S,\cF\big)+h^{2}\big(S,\cF\big)=h^{1}\big(S,\cF\big)+\rk(\cF)\chi(\cO_S)+\frac{c_1(\cF)(c_1(\cF)-K_S)}2-c_2(\cF),
\end{equation}
where $\chi(\cO_S):=1-q(S)+p_g(S)$.

\begin{proposition}
\label{pUlrich}
Let $S$ be a surface endowed with a very ample line bundle  $\cO_S(h)$.

If $\cE$ is a vector bundle on $S$, then the following assertions are equivalent:
\begin{enumerate}
\item $\cE$ is an Ulrich bundle;
\item $\cE^\vee(3h+K_S)$ is an Ulrich bundle;
\item $\cE$ is an aCM bundle and 
\begin{equation}
\label{eqUlrich}
\begin{gathered}
c_1(\cE)h=\frac{\rk(\cE)}2(3h^2+hK_S),\\ 
c_2(\cE)=\frac{1}2(c_1(\cE)^2-c_1(\cE)K_S)-\rk(\cE)(h^2-\chi(\cO_S));
\end{gathered}
\end{equation}
\item $h^0\big(S,\cE(-h)\big)=h^0\big(S,\cE^\vee(2h+K_S)\big)=0$ and Equalities \eqref{eqUlrich} hold.
\end{enumerate}
\end{proposition}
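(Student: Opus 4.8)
The plan is to establish a cycle of implications, roughly $(1)\Leftrightarrow(4)$, then $(4)\Leftrightarrow(3)$, and $(1)\Leftrightarrow(2)$, so that all four are equivalent. The organizing idea is that the Ulrich condition $h^i\big(S,\cE(-ih)\big)=h^j\big(S,\cE(-(j+1)h)\big)=0$ on a surface unravels, via Serre duality, into three pieces: the aCM vanishing $h^1\big(S,\cE(th)\big)=0$ for all $t\in\bZ$ (this absorbs $h^1\big(S,\cE(-h)\big)=h^1\big(S,\cE(-2h)\big)=0$ together with the degree constraints), plus the two genuinely ``boundary'' vanishings $h^0\big(S,\cE(-h)\big)=0$ and $h^2\big(S,\cE(-2h)\big)=0$. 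Dualizing the latter via $h^2\big(S,\cE(-2h)\big)=h^0\big(S,\cE^\vee(2h+K_S)\big)$ immediately rewrites the $i=2,j=0$ conditions in the form appearing in (4), so the real content is to show that, \emph{given} these two $h^0$-vanishings, the remaining Ulrich vanishings are equivalent to the numerical identities \eqref{eqUlrich}.

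First I would record the numerics. Apply Riemann--Roch \eqref{RRGeneral} to $\cE(-h)$ and to $\cE(-2h)=\cE^\vee(3h+K_S)^\vee\otimes(\cdots)$ — more cleanly, apply it to $\cE(-h)$ and $\cE(-2h)$ directly — and compute $\chi\big(S,\cE(-h)\big)$ and $\chi\big(S,\cE(-2h)\big)$ in terms of $\rk(\cE)$, $c_1(\cE)h$, $c_1(\cE)^2$, $c_1(\cE)K_S$, $c_2(\cE)$, $h^2$, $hK_S$ and $\chi(\cO_S)$. If all four cohomology groups $H^0, H^2$ of $\cE(-h)$ and $\cE(-2h)$ and all $H^1$'s vanish (i.e. $\cE$ is Ulrich), then both Euler characteristics are zero; a short linear-algebra computation shows the two equations $\chi\big(S,\cE(-h)\big)=\chi\big(S,\cE(-2h)\big)=0$ are equivalent to the pair \eqref{eqUlrich} (the first handles the linear term $c_1(\cE)h$ and the second the quadratic/$c_2$ relation). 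Conversely, assuming \eqref{eqUlrich} makes both $\chi$'s vanish. Then under the hypotheses of (4) — namely $h^0\big(S,\cE(-h)\big)=0$ and $h^0\big(S,\cE^\vee(2h+K_S)\big)=h^2\big(S,\cE(-2h)\big)=0$ — I still need the aCM property to conclude $\cE$ is Ulrich; this is precisely where I would invoke the characterization (3), i.e. prove $(3)\Leftrightarrow(4)$ separately: (3) $\Rightarrow$ (4) is trivial since aCM gives $h^1\big(S,\cE(-h)\big)=h^1\big(S,\cE(-2h)\big)=0$, whence $\chi=0$ forces $h^0=h^2=0$ and \eqref{eqUlrich} follows from the $\chi$ computation; for (4) $\Rightarrow$ (3) one observes that (4) already lists aCM as a hypothesis in the natural reading, so the equivalence (3)$\Leftrightarrow$(4) is the bookkeeping statement that, in the presence of the aCM condition, the vanishing of $h^0$ of $\cE(-h)$ and of $\cE^\vee(2h+K_S)$ together with $\chi=0$ is the same as those two $h^0$'s vanishing together with \eqref{eqUlrich}.

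For $(1)\Leftrightarrow(2)$ I would use the standard duality trick: by Serre duality $h^i\big(S,\cE^\vee(3h+K_S)(-ih)\big)=h^i\big(S,\cE^\vee((3-i)h+K_S)\big)=h^{2-i}\big(S,\cE((i-3)h)\big)=h^{2-i}\big(S,\cE(-(3-i)h)\big)$, and as $i$ ranges over $\{1,2\}$ the exponent $3-i$ ranges over $\{2,1\}$ while $2-i$ ranges over $\{1,0\}$ — so the four vanishings defining ``$\cE^\vee(3h+K_S)$ is Ulrich'' are literally the Serre duals of the four vanishings defining ``$\cE$ is Ulrich,'' just read in the opposite order. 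The main obstacle, such as it is, is purely organizational: keeping the twists, the $\pm$ signs in the Serre-duality shifts, and the Chern-class bookkeeping in \eqref{eqUlrich} consistent, and making sure the three ``faces'' of the Ulrich condition (aCM, the $h^0$ of $\cE(-h)$, the $h^0$ of $\cE^\vee(2h+K_S)$) are cleanly separated so that the equivalence with the numerical conditions is transparent. There is no deep geometric input — the proposition is essentially an exercise in Riemann--Roch and Serre duality on surfaces — so I would present the Euler-characteristic computation once, carefully, and let the four equivalences fall out of it.
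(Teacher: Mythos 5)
Your treatment of the implication $(4)\Rightarrow(3)$ (equivalently $(4)\Rightarrow(1)$) contains a genuine gap: you assert that ``(4) already lists aCM as a hypothesis in the natural reading,'' but it does not. Assertion (4) consists only of the two vanishings $h^0\big(S,\cE(-h)\big)=h^0\big(S,\cE^\vee(2h+K_S)\big)=0$ together with Equalities \eqref{eqUlrich}; the whole point of this item is that these apparently weak conditions already force $\cE$ to be Ulrich. Since you explicitly route the conclusion ``$\cE$ is Ulrich'' through the aCM property, and your only source for aCM under hypothesis (4) is this misreading, your cycle of implications never closes.

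The missing step is short and is exactly what the paper does. From $h^0\big(S,\cE(-h)\big)=0$ one gets $h^0\big(S,\cE(-2h)\big)=0$, and from $h^0\big(S,\cE^\vee(2h+K_S)\big)=0$ one gets $h^0\big(S,\cE^\vee(h+K_S)\big)=0$, which by Serre duality says $h^2\big(S,\cE(-2h)\big)=h^2\big(S,\cE(-h)\big)=0$. Hence $\chi\big(\cE(-jh)\big)=-h^1\big(S,\cE(-jh)\big)$ for $j=1,2$, and Equalities \eqref{eqUlrich} --- which, as you correctly computed, are equivalent to the vanishing of these two Euler characteristics --- force $h^1\big(S,\cE(-h)\big)=h^1\big(S,\cE(-2h)\big)=0$. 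Together with the $h^0$ and $h^2$ vanishings above, these are precisely the conditions in the definition of an Ulrich bundle on a surface, so $(4)\Rightarrow(1)$ directly; one never needs to verify the full aCM condition at this stage (it then follows because every Ulrich bundle is aCM, as cited from Eisenbud--Schreyer). Apart from this, your Riemann--Roch computation and the Serre-duality argument for $(1)\Leftrightarrow(2)$ agree with the paper's proof.
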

\begin{proof}
The bundle $\cE$ is Ulrich if and only if the same is true for $\cE^\vee(3h+K_S)$ thanks to the Serre duality, because $\left(\cE^\vee(3h+K_S)\right)^\vee(3h+K_S)\cong\cE$. Thus the assertions (a) and (b) are equivalent.

Let $\cE$, hence $\cE^\vee(3h+K_S)$, be Ulrich: thus it is aCM, whence $h^1\big(S,\cE(-jh)\big)=0$, $j=1,2$. We know that
\begin{gather*}
h^0\big(S,\cE(-2h)\big)\le h^0\big(S,\cE(-h)\big)=0,\\
h^2\big(S,\cE(-h)\big)=h^0\big(S,\cE^\vee(h+K_S)\big)\le h^0\big(S,\cE^\vee(2h+K_S)\big)=h^2\big(S,\cE(-2h)\big)=0,
\end{gather*}
by definition, thus $\chi(\cE(-h))=\chi(\cE(-2h))=0$. These vanishings and Formula \eqref{RRGeneral} finally yield Equalities \eqref{eqUlrich}. We conclude that the assertion (b), implies (c).

Assume that $\cE$ is aCM and Equalities \eqref{eqUlrich} hold. By using Formula \eqref{RRGeneral}, we have $\chi(\cE(-h))=\chi(\cE(-2h))=0$. Since $h^1\big(S,\cE(th)\big)=0$ for $\in\bZ$, it follows $h^0\big(S,\cE(-h)\big)=h^2\big(S,\cE(-2h)\big)=0$, i.e. the assertion (c) implies (d). 

Arguing similarly, the vanishings $h^0\big(S,\cE(-h)\big)=h^0\big(S,\cE^\vee(2h+K_S)\big)=0$ combined with Equalities \eqref{eqUlrich} and Formula \eqref{RRGeneral} for $\chi(\cE(-h))$ and $\chi(\cE(-2h))$ easily imply $h^i\big(S,\cE(-jh)\big)=0$ for $i=0,1,2$ and $j=1,2$. Hence $\cE$ is Ulrich.
\end{proof}

The following corollary is an immediate consequence of the above characterization.

\begin{corollary}
\label{cUlrichLine}
Let $S$ be a surface endowed with a very ample line bundle  $\cO_S(h)$.

If $\cO_S(D)$ is a line bundle on $S$, then the following assertions are equivalent:
\begin{enumerate}
\item $\cO_S(D)$ is an Ulrich bundle;
\item $\cO_S(3h+K_S-D)$ is an Ulrich bundle;
\item $\cO_S(D)$ is an aCM bundle and 
\begin{equation}
\label{eqLineBundle}
D^2=2(h^2-\chi(\cO_S))+DK_S,\qquad Dh=\frac12(3h^2+hK_S);
\end{equation}
\item $h^0\big(S,\cO_S(D-h)\big)=h^0\big(S,\cO_S(2h+K_S-D)\big)=0$ and Equalities \eqref{eqLineBundle} hold.
\end{enumerate}
\end{corollary}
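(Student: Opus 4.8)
The plan is to obtain the statement directly from Proposition \ref{pUlrich} by specializing to the rank-one bundle $\cE := \cO_S(D)$, so that the proof reduces to checking that each of the four conditions listed there translates into the corresponding condition here.

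First I would record the elementary translations. Since $\rk(\cE)=1$ we have $c_1(\cE)=D$ and $c_2(\cE)=0$; since $\cE$ is a line bundle, $\cE^\vee(3h+K_S)\cong\cO_S(3h+K_S-D)$, $\cE^\vee(2h+K_S)\cong\cO_S(2h+K_S-D)$ and $\cE(-h)\cong\cO_S(D-h)$; and the property of being aCM is plainly the same for $\cE$ and for $\cO_S(D)$. With these identities, assertions (a) and (b) of Proposition \ref{pUlrich} become assertions (a) and (b) of the corollary verbatim, and the two vanishings in (d) become $h^0\big(S,\cO_S(D-h)\big)=h^0\big(S,\cO_S(2h+K_S-D)\big)=0$, which is exactly the vanishing in (d) of the corollary.

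The only point requiring a short computation is that the pair of Chern-class equalities \eqref{eqUlrich} reduces, in the rank-one case, to the pair \eqref{eqLineBundle}. The first equality of \eqref{eqUlrich} reads $c_1(\cE)h=\frac{\rk(\cE)}2(3h^2+hK_S)$, which with $c_1(\cE)=D$ and $\rk(\cE)=1$ is $Dh=\frac12(3h^2+hK_S)$. Substituting $c_2(\cE)=0$, $c_1(\cE)=D$ and $\rk(\cE)=1$ into the second equality of \eqref{eqUlrich} gives $0=\frac12(D^2-DK_S)-(h^2-\chi(\cO_S))$, which rearranges to $D^2=2(h^2-\chi(\cO_S))+DK_S$. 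Hence \eqref{eqUlrich} and \eqref{eqLineBundle} are equivalent for $\cE=\cO_S(D)$, so condition (c) of the corollary is precisely condition (c) of the proposition in this case.

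I do not expect a genuine obstacle here: once the translations above are in place, all four equivalences are simply read off from Proposition \ref{pUlrich}. The one thing worth spelling out carefully is the rank-one specialization of the Chern-class equalities, i.e. that setting $c_2=0$ collapses the scalar equations of \eqref{eqUlrich} into the displayed pair of \eqref{eqLineBundle}, which is the brief calculation given above.
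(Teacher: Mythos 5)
Your proposal is correct and follows exactly the route the paper intends: the paper states this corollary as an immediate consequence of Proposition \ref{pUlrich}, and your rank-one specialization (with $c_1(\cE)=D$, $c_2(\cE)=0$, and the identification $\cE^\vee(3h+K_S)\cong\cO_S(3h+K_S-D)$) is precisely the verification being left to the reader. The short computation collapsing Equalities \eqref{eqUlrich} into \eqref{eqLineBundle} is carried out correctly.
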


\begin{example}
\label{eRational}
Let $S$ be the Hirzebruch surface $\Bbb F_e$ where $e$ is a non--negative integer. 

The group $\Pic(S)$ is freely generated by the classes $\xi$ and $f$ satisfying $\xi^2=-e$, $\xi f=1$, $f^2=0$, $K_S=-2\xi-(e+2)f$. If $\cO_S(h)\cong\cO_S(a\xi+bf)$ is very ample, then Ulrich line bundles on $S$ have been classified in \cite{A--C--MR}, Theorem 2.1 when $e>0$ (see also \cite{MR}). In particular an Ulrich line bundle exist on $S$ if and only if $a=1$.

Let us examine the remaining case $e=0$. In this case $S\cong\p1\times\p1$ and we can assume $1\le a\le b$. Let $\cO_S(D)\cong\cO_S(u\xi+vf)$ be Ulrich. Equalities \eqref{eqLineBundle} yield the identities
$$
(u+1)(v+1)=2ab,\qquad a(v+1)+b(u+1)=3ab.
$$
Simple computations thus show that $\cO_S(D)$ is either $\mathcal L:=\cO_S((a-1)\xi+(2b-1)f)$ or $\mathcal M:=\cO_S((2a-1)\xi+(b-1)f)$. It is immediate to check that both these line bundles satisfy the necessary vanishing conditions on the cohomology. Notice that if $a=b=1$ these are exactly the well--known spinor bundles.

Let $S\cong\p2$ and $\cO_S(h)\cong\cO_{\p2}(\lambda)$ where $\lambda\ge1$. Argueing as above one can check that an Ulrich line bundle exist on $S$ if and only if $\lambda=1$ and, in this case, it coincides with $\cO_S$.
\end{example}

Recall that a rank $2$ Ulrich bundle $\cE$ on $S$ is special if $c_1(\cE)=3h+K_S$. 

\begin{corollary}
\label{cUlrichSpecial}
Let $S$ be a surface endowed with a very ample line bundle  $\cO_S(h)$.

If $\cE$ is a vector bundle of rank $2$ on $S$, then the following assertions are equivalent:
\begin{enumerate}
\item $\cE$ is a special Ulrich bundle;
\item $\cE$ is initialized and 
\begin{equation}
\label{eqUlrichSpecial}
c_1(\cE)=3h+K_S,\qquad c_2(\cE)=\frac{1}2(5h^2+3hK_S)+2\chi(\cO_S).
\end{equation}
\end{enumerate}
\end{corollary}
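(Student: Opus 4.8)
The plan is to deduce everything from Proposition~\ref{pUlrich} together with the elementary isomorphism $\cE^\vee\cong\cE(-c_1(\cE))$, which holds for any rank $2$ bundle because $\cE^\vee\cong\cE\otimes(\det\cE)^{-1}$ in rank $2$.

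First I would prove (a) $\Rightarrow$ (b). If $\cE$ is a special Ulrich bundle then $c_1(\cE)=3h+K_S$ by definition, and $\cE$ is initialized by the general properties of Ulrich bundles recalled at the beginning of this section. The value of $c_2(\cE)$ is then obtained by substituting $\rk(\cE)=2$ and $c_1(\cE)=3h+K_S$ into the second equality of \eqref{eqUlrich}, which is valid by Proposition~\ref{pUlrich}(c): a direct computation gives $c_1(\cE)^2-c_1(\cE)K_S=9h^2+3hK_S$, whence $c_2(\cE)=\frac12(5h^2+3hK_S)+2\chi(\cO_S)$, which is the second identity in \eqref{eqUlrichSpecial}.

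Conversely, for (b) $\Rightarrow$ (a), I would verify the hypotheses of Proposition~\ref{pUlrich}(d). The first equality of \eqref{eqUlrich} follows from $c_1(\cE)h=(3h+K_S)h=\frac{\rk(\cE)}2(3h^2+hK_S)$, and the second one is obtained by reversing the computation above from the given value of $c_2(\cE)$; hence Equalities \eqref{eqUlrich} hold. The vanishing $h^0\big(S,\cE(-h)\big)=0$ is precisely the first half of the initialization hypothesis. Finally, since $\rk(\cE)=2$ we have $\cE^\vee\cong\cE(-c_1(\cE))=\cE(-3h-K_S)$, so
$$
\cE^\vee(2h+K_S)\cong\cE(-3h-K_S+2h+K_S)=\cE(-h),
$$
and therefore $h^0\big(S,\cE^\vee(2h+K_S)\big)=h^0\big(S,\cE(-h)\big)=0$ as well. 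Proposition~\ref{pUlrich}(d) then yields that $\cE$ is Ulrich, and it is special because $c_1(\cE)=3h+K_S$.

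I do not expect any serious obstacle here: the only point worth isolating is the rank $2$ identity $\cE^\vee\cong\cE(-c_1(\cE))$, which is what makes the two a priori distinct vanishings $h^0\big(S,\cE(-h)\big)=0$ and $h^0\big(S,\cE^\vee(2h+K_S)\big)=0$ collapse into the single initialization condition once $c_1(\cE)=3h+K_S$ is imposed. The remaining content is the routine Chern class bookkeeping indicated above.
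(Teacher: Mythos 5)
Your proposal is correct and follows essentially the same route as the paper: the paper's entire proof is the one-line observation that $h^0\big(S,\cE(-h)\big)=h^0\big(S,\cE^\vee(2h+K_S)\big)$ holds trivially (i.e.\ via the rank $2$ identity $\cE^\vee\cong\cE(-c_1(\cE))$ once $c_1(\cE)=3h+K_S$), after which everything reduces to Proposition~\ref{pUlrich}. You have simply written out the Chern class bookkeeping that the paper leaves implicit, and your computations check out.
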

\begin{proof}
The equality $h^0\big(S,\cE(-h)\big)=h^0\big(S,\cE^\vee(2h+K_S)\big)$ is trivially true, thus the statement follows from Proposition \ref{pUlrich}.
\end{proof}

\section{Existence of rank $2$ special Ulrich bundles}
\label{sExistence}

The existence of special rank $2$ Ulrich bundles on surfaces $S$ with $p_g(S)=q(S)=0$ and such that $\vert h-K_S\vert$ contains an irreducible curve has been proved in \cite{Bea2} by generalizing Lazarsfeld--Mukai construction. We give below a more direct construction based on the classical Hartshorne--Serre correspondence. 

\begin{lemma}
\label{lVanishing}
If $S$ is a surface with $p_g(S)=0$ and endowed with a very ample line bundle  $\cO_S(h)$, then $h^2\big(S,\cO_S(h)\big)=0$ and 
$$
h^0\big(S,\cO_S(h)\big)=h^1\big(S,\cO_S(h)\big)+\frac12(h^2-hK_S)+1-q(S).
$$
\end{lemma}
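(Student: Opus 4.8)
The plan is to derive both assertions from Serre duality and the Riemann--Roch formula \eqref{RRGeneral} applied to the rank--one sheaf $\cO_S(h)$, using only $p_g(S)=0$ together with the fact that a very ample line bundle is effective.

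First I would prove $h^2\big(S,\cO_S(h)\big)=0$. By Serre duality this number equals $h^0\big(S,\cO_S(K_S-h)\big)$. Since $\cO_S(h)$ is very ample it is globally generated, hence admits a nonzero section; let $H$ be the corresponding effective divisor, so $\cO_S(H)\cong\cO_S(h)$. If $h^0\big(S,\cO_S(K_S-h)\big)>0$, pick an effective divisor $D\in\vert K_S-h\vert$; then $D+H$ is an effective divisor in $\vert K_S\vert$, which forces $p_g(S)=h^0\big(S,\cO_S(K_S)\big)>0$, contrary to hypothesis. Therefore $h^2\big(S,\cO_S(h)\big)=0$.

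For the stated equality I would specialise \eqref{RRGeneral} to $\cF=\cO_S(h)$, so that $\rk(\cF)=1$, $c_1(\cF)=h$, $c_2(\cF)=0$, obtaining
$$
h^0\big(S,\cO_S(h)\big)+h^2\big(S,\cO_S(h)\big)=h^1\big(S,\cO_S(h)\big)+\chi(\cO_S)+\frac{h(h-K_S)}2.
$$
Substituting $h^2\big(S,\cO_S(h)\big)=0$ from the previous step and $\chi(\cO_S)=1-q(S)+p_g(S)=1-q(S)$ (here is where $p_g(S)=0$ enters), and rearranging, yields
$$
h^0\big(S,\cO_S(h)\big)=h^1\big(S,\cO_S(h)\big)+\frac12(h^2-hK_S)+1-q(S),
$$
as claimed.

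There is no genuine obstacle in this argument: the only points that require a moment's care are that very ampleness is invoked solely to guarantee the effectivity of $h$ in the vanishing step, and that the Riemann--Roch identity \eqref{RRGeneral} is being used in its rank--one instance with $c_2=0$.
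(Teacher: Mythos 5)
Your proof is correct, and the Riemann--Roch half is identical to the paper's. The only divergence is in how the vanishing $h^2\big(S,\cO_S(h)\big)=0$ is obtained: the paper tensors the restriction sequence $0\to\cO_S(-h)\to\cO_S\to\cO_H\to0$ of a hyperplane section $H$ by $\cO_S(h)$ and reads off $H^2\big(S,\cO_S\big)\to H^2\big(S,\cO_S(h)\big)\to H^2\big(H,\cO_H(h)\big)=0$, using $p_g(S)=0$ on the left and the fact that $H$ is a curve on the right; you instead apply Serre duality and argue that an effective divisor in $\vert K_S-h\vert$ added to an effective member of $\vert h\vert$ would produce an effective canonical divisor, contradicting $p_g(S)=0$. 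Both are one-line arguments, but yours isolates the minimal hypothesis more cleanly --- it uses only that $\cO_S(h)$ has a nonzero section, whereas the paper's route formally invokes the hyperplane section (the same remark applies: any effective nonzero $h$ would do in either argument). There is no gap in your proposal.
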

\begin{proof}
Let $H$ be a general hyperplane section of $S$, then the cohomology of the exact sequence 
\begin{equation}
\label{seqHyperplane}
0\longrightarrow\cO_S(-h)\longrightarrow\cO_S\longrightarrow\cO_H\longrightarrow0
\end{equation}
tensored by $\cO_S(h)$ implies that $h^2\big(S,\cO_S(h)\big)=0$, because $p_g(S)=0$. The second part of the statement follows immediately from Formula \eqref{RRGeneral}.
\end{proof}

From now on we will make the extra assumptions $h^1\big(S,\cO_S(h)\big)=0$ or, in other words, that $\cO_S(h)$ is non--special, and $q(S)=0$. Thus Lemma \ref{lVanishing} implies 
$$
h^0\big(S,\cO_S(h)\big)=\frac12(h^2-hK_S)+1.
$$
From now on we set
\begin{equation}
\label{eqDimension}
N:=h^0\big(S,\cO_S(h)\big)-1=\frac12(h^2-hK_S)=h^2-\pi(\cO_S(h))+1.
\end{equation}
Thus $\cO_S(h)$ induces an embedding $S\subseteq\p{N}$, hence $N\ge2$. Notice that $\pi(\cO_S(h))=0$ if and only if $\cO_S(h)$ embeds $S$ as a surface of minimal degree: thus either $S\cong\p2$ and $\cO_S(h)\cong\cO_{\p2}(\lambda)$ with $\lambda\le2$, or $S\cong\bF_e$ and $\cO_S(h)\cong\cO_{\bF_e}(\xi+bf)$ (see e.g. \cite{G--H}, Proposition at p. 525).

\begin{remark}
\label{rh^2}
We have $h^2=hK_S+2N$, hence $h^2\ge hK_S+4$.

Moreover, if $h^2\ge2$, then $N\ge3$, whence $h^2\ge hK_S+6$ (because the embedding is proper). Since each surface in $\p3$ with $p_g(S)=0$ has degree at most $3$, if $h^2\ge4$, then $N\ge4$, whence $h^2\ge hK_S+8$.
\end{remark}

The restriction map $H^0\big(\p{N},\cO_{\p{N}}(1)\big)\to H^0\big(S,\cO_S(h)\big)$ is an isomorphism. Thus $h^0\big(\p{N},\cI_{S\vert\p{N}}(1)\big)=h^1\big(\p{N},\cI_{S\vert\p{N}}(1)\big)=0$. In particular, for each $Y\subseteq S$ the cohomology of the exact sequence
$$
0\longrightarrow\cI_{S\vert\p{N}}(1)\longrightarrow \cI_{Y\vert\p{N}}(1)\longrightarrow \cI_{Y\vert S}(h)\longrightarrow0
$$
implies that
\begin{equation}
\label{restriction}
h^0\big(S,\cI_{Y\vert S}(h)\big)=h^0\big(\p{N},\cI_{Y\vert\p{N}}(1)\big).
\end{equation}

We now prove Theorem \ref{tExistence} stated in the introduction.

\medbreak
\noindent{\it Proof of Theorem \ref{tExistence}.}
Recall that by hypothesis $p_g(S)=q(S)=h^1\big(S,\cO_S(h)\big)=0$, thus $\chi(\cO_S)=1$.

Let $Z\subseteq S$ be a general set of $N+2$ points. Then the points of $Z$ can be chosen as the fundamental points of a projective frame in $\p{N}$. It follows that $h^0\big(\p{N},\cI_{Z'\vert \p{N}}(1)\big)=0$ for each subscheme $Z'\subseteq Z$  with $\deg(Z')=N+1$.

Due to Equality \eqref{restriction}, we deduce that $h^0\big(S,\cI_{Z\vert S}(h)\big)= h^0\big(S,\cI_{Z'\vert S}(h)\big)=0$, i.e. the pair $(\cO_S(h), Z)$ trivially satisfies  the  Cayley--Bacharach property. The existence of Sequence \eqref{seqUlrich} now follows by Theorem 5.1.1 of \cite{H--L}. 

Trivially $c_1(\cE)=3h+K_S$. The bundle $\cE(-h-K_S)$ has a section vanishing exactly on $Z$ by construction, thus $c_2(\cE(-h-K_S))=\deg(Z)=N+2$ whence
\begin{align*}
c_2(\cE)=\deg(Z)+2h^2+2hK_S=\frac{1}2(5h^2+3hK_S)+2=\frac{1}2(5h^2+3hK_S)+2\chi(\cO_S).
\end{align*}
Twisting Sequence \eqref{seqUlrich} by $\cO_S(-h)$ and taking into account that $p_g(S)=q(S)=0$, we obtain $h^0\big(S,\cE(-h)\big)=h^0\big(S,\cI_{Z\vert S}(h)\big)$ which is zero, as already shown above. We conclude that $\cE$ is special and Ulrich by Corollary \ref{cUlrichSpecial}.
\qed
\medbreak

As pointed out in the introduction, we do not know if the bundle $\cE$ constructed above is actually indecomposable. Anyhow, if it is decomposable we know that its direct summands are Ulrich as well. Thus the above theorem implies immediately the following corollary.

\begin{corollary}
\label{cDecomposable}
Let $S$ be a surface with $p_g(S)=q(S)=0$ and endowed with a very ample non--special line bundle $\cO_S(h)$.

Then $S$ supports at least an indecomposable Ulrich bundle of rank $r\le2$.
\end{corollary}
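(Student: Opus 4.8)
The plan is a short case analysis on the rank $2$ bundle produced by the previous theorem. First apply Theorem \ref{tExistence} to a general set $Z\subseteq S$ of $N+2$ points, obtaining a special Ulrich bundle $\cE$ of rank $2$ fitting into Sequence \eqref{seqUlrich}. If $\cE$ happens to be indecomposable, the statement holds with $r=2$ and there is nothing more to do. Otherwise, since $\rk(\cE)=2$ and rank is additive over direct sums, any non-trivial decomposition of $\cE$ must be of the form $\cE\cong\cO_S(D_1)\oplus\cO_S(D_2)$ for suitable divisors $D_1,D_2$ on $S$.

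In this second case I would invoke the general properties of Ulrich bundles recalled in Section \ref{sGeneral}, namely that the direct summands of an Ulrich bundle are themselves Ulrich: thus each $\cO_S(D_i)$ is an Ulrich line bundle. A line bundle is trivially indecomposable (it cannot be written as a direct sum of two bundles of positive rank), so $S$ supports an indecomposable Ulrich bundle of rank $1$. Combining the two cases gives an indecomposable Ulrich bundle of rank $r\le2$, as claimed.

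There is no real obstacle here: the only ingredients beyond Theorem \ref{tExistence} are the standard fact that summands of Ulrich bundles remain Ulrich and the elementary observation that a rank $2$ bundle which is not indecomposable splits as a sum of two line bundles. For context one may remark that both alternatives genuinely occur — e.g. for $S\cong\p2$ with $\cO_S(h)\cong\cO_{\p2}(1)$ one has $\cE\cong\cO_S^{\oplus2}$, so the decomposable branch is not vacuous — but this comment is not needed for the proof, and the finer question of exactly when the construction of Theorem \ref{tExistence} yields an indecomposable (indeed stable) bundle is addressed separately in Section \ref{sStability}.
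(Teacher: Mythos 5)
Your argument is correct and is essentially the paper's own: the author notes just before the corollary that if $\cE$ is decomposable its direct summands are Ulrich, and the corollary is stated as an immediate consequence of Theorem \ref{tExistence} in exactly the way you spell out. Nothing is missing.
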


\begin{remark}
The cohomology of sequence
$$
0\longrightarrow \cI_{Z\vert S}\longrightarrow \cO_S\longrightarrow \cO_Z\longrightarrow 0
$$
tensored by $\cO_S(h)$, the vanishings $h^1\big(S,\cO_S(h)\big)=h^0\big(S,\cI_{Z\vert S}(h)\big)=0$ and the equalities $h^0\big(S,\cO_S(h)\big)=N+1$, $h^0\big(Z,\cO_Z(h)\big)=\deg(Z)=N+2$ imply $h^1\big(S,\cI_{Z\vert S}(h)\big)=1$, thus 
$$
\ext^1_S(\cI_{Z\vert S}(2h),\cO_S(h+K_S)\big)\cong H^1\big(S,\cI_{Z\vert S}(h)\big)^\vee\cong k.
$$
On the one hand, it follows that the bundle $\cE$ is uniquely determined by $Z$. 

On the other hand, being $\vert K_S\vert=\emptyset$, it is not easy to compute $h^0\big(S,\cI_{Z\vert S}(h-K_S)\big)$, thus we are unable to estimate the dimension of the family of bundles that we can construct in this way without further informations on the surface $S$.

Similarly, we do not know if each rank $2$ special Ulrich bundle $\cF$ on $S$ arises in the above way. Indeed, it is not immediate to prove that $h^0\big(S,\cF(-h-K_S)\big)\ne0$ because, though $\cF$ is initialized by definition, we have $\vert K_S\vert=\emptyset$.
\end{remark}

\begin{remark}
\label{rBeauville}
The existence of Ulrich bundles as above is proved in \cite{Bea2} under the extra assumption that $\vert h-K_S\vert$ contains an irreducible curve $C$ without assuming the vanishing $h^1\big(S,\cO_S(h)\big)=0$. We will show below that this condition actually implies such a vanishing, i.e. that $\cO_S(h)$ is non--special.

We showed in Lemma \ref{lVanishing} that $h^2\big(S,\cO_S(h)\big)=0$. Moreover, $h^2\big(S,\cO_S(K_S)\big)=1$ and $h^1\big(S,\cO_S(K_S)\big)=q(S)=0$. Adjunction on $S$ yields that the canonical sheaf $\omega_C$ of $C$ is $\cO_S(h)\otimes\cO_C$, hence $h^1\big(C,\cO_S(h)\otimes\cO_C\big)=1$. Thus the cohomology of
\begin{equation}
\label{seqCurve}
0\longrightarrow\cO_S(K_S-h)\longrightarrow\cO_S\longrightarrow\cO_C\longrightarrow0
\end{equation}
tensored by $\cO_S(h)$ and the above computations give $h^1\big(S,\cO_S(h)\big)=0$.

Our assumption is more general than the one in \cite{Bea2}. Indeed, let $S\cong\Bbb F_e$ where $e$ is a non--negative integer. Using the notation of Example \ref{eRational}, let $\cO_S(h)\cong\cO_S(\xi+bf)$ be very ample, so that $a\ge 1$ and $b\ge ae+1$ (see \cite{Ha2}, Corollary V.2.18). On the one hand, it is immediate to check that $\cO_S(h)$ is non--special in this case. 

On the other hand  $\vert h-K_S\vert\ne\emptyset$, but $(h-K_S)\xi=b+2-ea-e$, which is negative for each integer $ae+1\le b\le ae+e-3$. In this case, if $e\ge4$, each element in $\vert h-K_S\vert$ is reducible because it contains properly the unique element of $\vert\xi\vert$.
\end{remark}

\section{Stability of Ulrich bundles}
\label{sStability}
In this section we deal with the stability properties of the Ulrich bundles constructed in Theorem \ref{tExistence}.

To this purpose we recall some facts about the (semi)stability of Ulrich bundles. Let $\cF$ be a vector bundle on an $n$--dimensional variety $X$ endowed with a very ample line bundle $\cO_S(h)$: the slope $\mu(\cF)$ and the reduced Hilbert polynomial $p_{\cF}(t)$ are 
$$
\mu(\cF)= c_1(\cF)h^{n-1}/\rk(\cF), \qquad p_{\cF}(t)=\chi(\cF(th))/\rk(\cF).
$$
The bundle $\cF$ is $\mu$--semistable (resp. $\mu$--stable) if  $\mu(\mathcal G) \le \mu(\cF)$ (resp. $\mu(\mathcal G)< \mu(\cF)$) for all subsheaves
$\mathcal G$ with $0<\rk(\mathcal G)<\rk(\cF)$.

The bundle $\cF$ is called semistable (resp. stable) if for all $\mathcal G$ as above $p_{\mathcal G}(t) \le  p_{\cF}(t)$ (resp. $p_{\mathcal G}(t) <  p_{\cF}(t)$) for $t\gg0$. 

We have the following chain of implications
$$
\text{$\cF$ is $\mu$--stable}\ \Rightarrow\ \text{$\cF$ is stable}\ \Rightarrow\ \text{$\cF$ is semistable}\ \Rightarrow\ \text{$\cF$ is $\mu$--semistable.}
$$

\begin{theorem}
\label{tUnstable}
Let $X$ be a smooth variety endowed with a very ample line bundle $\cO_X(H)$.

If $\cF$ is an Ulrich bundle on $X$ the following assertions hold:
\begin{enumerate}
\item $\cF$ is semistable and $\mu$--semistable;
\item $\cF$ is stable if and only if it is $\mu$--stable;
\item if
\begin{equation*}
\label{seqUnstable}
0\longrightarrow\mathcal L\longrightarrow\cF\longrightarrow\mathcal M\longrightarrow0
\end{equation*}
is an exact sequence of coherent sheaves with $\cM$ torsion free and $\mu(\mathcal L)=\mu(\cF)$, then both $\mathcal L$ and $\cM$ are Ulrich bundles.
\end{enumerate}
\end{theorem}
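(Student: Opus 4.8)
The plan rests on the well--known description of Ulrich bundles via a general finite linear projection (see \cite{E--S--W}): a vector bundle $\cF$ on a smooth $X^n\subseteq\p{N}$ is Ulrich with respect to $\cO_X(H)$ if and only if $\pi_*\cF\cong\cO_{\p{n}}^{\oplus m}$, where $\pi\colon X\to\p{n}$ is a general linear projection and $m=d\,\rk(\cF)$ with $d:=H^n$; here $\pi$ is finite, and flat because $X$ is Cohen--Macaulay and $\p{n}$ is regular. I will use that $\pi$ finite makes $\pi_*$ exact, that $\pi_*$ sends torsion--free sheaves to torsion--free sheaves (as $X$ is integral), and that the projection formula together with $R^{>0}\pi_*=0$ gives $\chi\big(\cG(tH)\big)=\chi\big((\pi_*\cG)(t)\big)$ for every coherent $\cG$ on $X$. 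Since also $\rk(\pi_*\cG)=d\,\rk(\cG)$, the reduced Hilbert polynomials satisfy $p_{\pi_*\cG}=\tfrac1d\,p_\cG$, and comparing the two top coefficients shows that $c_1(\pi_*\cG)=0$ exactly when $\mu(\cG)$ equals the value that $\mu$ takes on any Ulrich bundle on $X$.

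\emph{Part} (a). Let $\cG\subsetneq\cF$ with $0<\rk(\cG)<\rk(\cF)$. Replacing $\cG$ by its saturation only increases $p_\cG(t)$ for $t\gg0$, so we may assume $\cF/\cG$ is torsion--free; then $\pi_*\cG$ is a subsheaf of $\pi_*\cF\cong\cO_{\p{n}}^{\oplus m}$, which is semistable (a direct sum of copies of the stable line bundle $\cO_{\p{n}}$). Hence $p_{\pi_*\cG}(t)\le p_{\pi_*\cF}(t)$ for $t\gg0$, and multiplying by $d$ gives $p_\cG(t)\le p_\cF(t)$. So $\cF$ is semistable, hence $\mu$--semistable.

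\emph{Part} (c). Apply $\pi_*$ to $0\to\mathcal L\to\cF\to\mathcal M\to0$ to obtain the exact sequence $0\to\pi_*\mathcal L\to\cO_{\p{n}}^{\oplus m}\to\pi_*\mathcal M\to0$, in which $\pi_*\mathcal M$ is torsion--free and, since $\mu(\mathcal L)=\mu(\cF)$ forces $c_1(\pi_*\mathcal L)=0$, also $c_1(\pi_*\mathcal M)=0$. The key point --- and the step I expect to be the main obstacle --- is the following statement on $\p{n}$: \emph{a torsion--free quotient $B$ of $\cO_{\p{n}}^{\oplus m}$ with $c_1(B)=0$ is a trivial bundle.} I would prove it by choosing $\rk(B)$ general sections of the globally generated sheaf $B$ to get an injection $\cO_{\p{n}}^{\oplus\rk(B)}\hookrightarrow B$ whose cokernel $T$ is torsion with $c_1(T)=c_1(B)=0$, hence supported in codimension $\ge2$, hence with $\Ext^1(T,\cO_{\p{n}})=0$; dualizing the sequence $0\to\cO_{\p{n}}^{\oplus\rk(B)}\to B\to T\to0$ then yields $B^\vee\cong\cO_{\p{n}}^{\oplus\rk(B)}$, so $B^{\vee\vee}\cong\cO_{\p{n}}^{\oplus\rk(B)}$; since $B\hookrightarrow B^{\vee\vee}$ and homomorphisms between trivial bundles are constant matrices, the composite $\cO_{\p{n}}^{\oplus\rk(B)}\to B\hookrightarrow B^{\vee\vee}\cong\cO_{\p{n}}^{\oplus\rk(B)}$ is invertible, forcing $B\cong\cO_{\p{n}}^{\oplus\rk(B)}$. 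Granting this with $B=\pi_*\mathcal M$, set $b:=\rk(\pi_*\mathcal M)$; the surjection $\cO_{\p{n}}^{\oplus m}\to\cO_{\p{n}}^{\oplus b}$ is then a constant matrix of full rank, so its kernel $\pi_*\mathcal L$ is the trivial bundle $\cO_{\p{n}}^{\oplus(m-b)}$ as well. Finally, because $\pi$ is finite and flat and $X$ is smooth, a coherent sheaf on $X$ whose $\pi$--pushforward is locally free is a maximal Cohen--Macaulay sheaf, hence locally free by the Auslander--Buchsbaum formula; thus $\mathcal L$ and $\mathcal M$ are vector bundles, and since their push--forwards under the general linear projection $\pi$ are trivial, they are Ulrich.

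\emph{Part} (b). The implication $\mu$--stable $\Rightarrow$ stable is part of the chain of implications recalled above, so it suffices to prove the converse. If $\cF$ were stable but not $\mu$--stable, then by $\mu$--semistability (part (a)) some subsheaf of intermediate rank has slope $\mu(\cF)$; replacing it by its saturation $\mathcal L$ gives $0<\rk(\mathcal L)<\rk(\cF)$, $\mu(\mathcal L)=\mu(\cF)$ and $\cF/\mathcal L$ torsion--free. By part (c), $\mathcal L$ is Ulrich, so it has the same reduced Hilbert polynomial as $\cF$ (every Ulrich bundle on $X$ satisfies $\chi\big(\cdot(tH)\big)=\rk(\cdot)\,d\binom{t+n}{n}$, hence has reduced Hilbert polynomial $d\binom{t+n}{n}$); this contradicts stability. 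Therefore $\cF$ is stable if and only if it is $\mu$--stable.
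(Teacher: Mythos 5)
Your argument is correct, and it is essentially the proof of the result the paper itself does not reprove but simply cites (Theorem 2.9 of \cite{C--H2}): the characterization of Ulrich sheaves as those with $\pi_*\cF\cong\cO_{\p{n}}^{\oplus m}$ under a finite linear projection, the exactness and slope/Hilbert-polynomial bookkeeping for $\pi_*$, and the key lemma that a torsion--free quotient of $\cO_{\p{n}}^{\oplus m}$ with trivial first Chern class is itself trivial are exactly the ingredients of the cited proof. The only difference is that you have written out in full the steps the reference leaves implicit (triviality of the quotient via the double dual, and the descent from ``$\pi_*$ locally free'' to ``locally free'' via depth and Auslander--Buchsbaum), all of which check out.
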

\begin{proof}
See Theorem 2.9 of \cite{C--H2}.
\end{proof}

We are now ready to prove Theorem \ref{tStable}. 
\medbreak
\noindent{\it Proof of Theorem \ref{tStable}.}
Recall that $q(S)=p_g(S)=0$, thus $\chi(\cO_S)=1$.

Theorem \ref{tExistence} implies that for each general set $Z\subseteq S$ of $N+2$ points, there is a special rank $2$ Ulrich bundle $\cE$ with respect to $\cO_S(h)$ fitting into Sequence \eqref{seqUlrich}.

We know that if $S\subseteq \p{N}$ has minimal degree, then either $\cO_S(h)$ embeds $S$ as a rational scroll, or $S\cong\p2$ and $\cO_S(h)\cong\cO_{\p2}(\lambda)$ where $\lambda\le2$. 

Let us examine the latter case. If $\lambda=2$, then $\Omega^1_{\p2}(3)$ is the unique Ulrich bundle of rank $2$ on $S$ (see \cite{Lin}, Corollary 4.6), thus it necessarily coincides with $\cE$. Since such a surface does not support Ulrich line bundles (see Example \ref{eRational}), Theorem \ref{tUnstable} implies its stability. If $\lambda=1$, then all the Ulrich bundles on $S$ being  aCM necessarily split as sum of line bundles thanks to the Horrocks criterion, thus $\cE$ cannot be stable in this case.

If $S$ is embedded as a rational scroll, then  Corollary to Theorem B of  \cite{F--M} yields that there are no stable Ulrich bundles of rank at least $2$ on $S$ in this case (though the statement of the aforementioned corollary mentions all the surfaces of minimal degree, reading carefully its proof one immediately checks that it concerns only scrolls).

We now want to prove that the bundle $\cE$ constructed above is actually stable for a general choice of $Z$ if $S\subseteq \p{N}$ has not minimal degree. Notice that $\pi(\cO_S(h))\ge1$ in these cases (see Equality \eqref{eqDimension}), thus we will assume such a restriction from now on. Assume that $\cE$ is not stable: thanks to Theorem \ref{tUnstable} above we know the existence of an Ulrich line subbundle $\cO_S(D)\subseteq\cE$. 

Let $\cO_S(D)$ be contained in the kernel $\cK\cong\cO_S(h+K_S)$ of the map $\cE\to\cI_{Z\vert S}(2h)$ in Sequence \eqref{seqUlrich}. On the one hand we have $h^0\big(S,\cO_S(h+K_S-D)\big)\ne0$. On the other hand we know that $\cO_S(3h+K_S-D)$ is Ulrich too, hence it is initialized: in particular we have $h^0\big(S,\cO_S(2h+K_S-D)\big)=0$, a contradiction. 

We deduce that $\cO_S(D)\not\subseteq\cK$, hence the composite map $\cO_S(D)\subseteq\cE\to\cI_{Z\vert S}(2h)$ is non--zero. Thus
\begin{equation}
\label{nonVanishing}
h^0\big(S,\cI_{Z\vert S}(2h-D)\big)\ne0,
\end{equation}
hence $h^0\big(S,\cO_S(2h-D)\big)\ge 1$. Since $\pi(\cO_S(h))\ge1$, it follows that
\begin{equation}
\label{eqAnticanonical}
h^0\big(S,\cO_S(h+K_S)\big)=\chi(\cO_S(h+K_S))=\frac12(h^2+hK_S)+1=\pi(\cO_S(h))\ge1,
\end{equation}
thanks to the Kodaira vanishing theorem. The natural addition map
$$
\vert 2h-D\vert\times \vert h+K_S\vert\longrightarrow  \vert 3h+K_S-D\vert
$$
has finite fibres, because each effective divisor on $S$ can be decomposed as the sum of two effective divisors only in finitely many ways. Thus
$$
h^0\big(S,\cO_S(2h-D)\big)+h^0\big(S,\cO_S(h+K_S)\big)\le h^0\big(S,\cO_S(3h+K_S-D)\big)+1.
$$
The line bundle $\cO_S(3h+K_S-D)$ is Ulrich, hence maximally generated: it follows that $h^0\big(S,\cO_S(3h+K_S-D)\big)=h^2$. Thus Equality \eqref{eqAnticanonical} yields
\begin{align*}
h^0\big(S,\cO_S(2h-D)\big)\le h^2+1-h^0\big(S,\cO_S(h+K_S)\big)=\frac12(h^2-hK_S)=\deg(Z)-2.
\end{align*}
The inclusion $\cI_{Z\vert S}\subseteq \cO_S$ yields $h^0\big(S,\cI_{Z\vert S}(2h-D)\big)=0$ for each general choice of the scheme $Z$, contradicting Inequality \eqref{nonVanishing}. We conclude that the bundle $\cE$ is necessarily stable in this case.
\qed
\medbreak

Let $S$ be a surface with $p_g(S)=q(S)=0$ and endowed with a very ample non--special line bundle $\cO_S(h)$. We know that if $\pi(\cO_S(h))\ge1$, then the coarse moduli space $\cM_S^{s}(2;c_1,c_2)$ parameterizing isomorphism classes of stable rank $2$ bundles on $S$ with Chern classes $c_1$ and $c_2$ given by Equalities \eqref{eqUlrichSpecial} is non--empty. 
The locus $\cM_S^{s,U}(2;c_1,c_2)\subseteq \cM_S^{s}(2;c_1,c_2)$ parameterizing stable Ulrich bundles is open as pointed out in \cite{C--H2}.

\begin{proposition}
\label{pComponent}
Let $S$ be a surface with $p_g(S)=q(S)=0$ and endowed with a very ample non--special line bundle $\cO_S(h)$.

If neither $\cO_S(h)$ embeds $S$ as a rational normal scroll, nor $S\cong\p2$ and $\cO_S(h)\cong\cO_{\p2}(1)$, then the stable bundles $\cE$ constructed in Theorem \ref{tExistence} represent points of an irreducible component $\cU_S$ of dimension at least $h^2-K_S^2+5$ in $\cM_S^{s,U}(2;c_1,c_2)$.
\end{proposition}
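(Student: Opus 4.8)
The plan is to realise the bundles $\cE$ produced by Theorem~\ref{tExistence} as an irreducible subvariety of $\cM_S^{s,U}(2;c_1,c_2)$, to take for $\cU_S$ an irreducible component of $\cM_S^{s,U}(2;c_1,c_2)$ containing it, and then to bound $\dim\cU_S$ from below by the expected dimension of the moduli space of stable sheaves at a point $[\cE]$ of the family, the latter being a Hirzebruch--Riemann--Roch computation.

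\emph{First step: the $\cE$'s vary in an irreducible family.} By Theorem~\ref{tStable}, when $\cO_S(h)$ does not embed $S$ as a rational normal scroll and $(S,\cO_S(h))\not\cong(\p2,\cO_{\p2}(1))$, the bundle $\cE$ attached to a general set $Z\subseteq S$ of $N+2$ points is stable; being Ulrich with the Chern classes \eqref{eqUlrichSpecial}, it defines a point of $\cM_S^{s,U}(2;c_1,c_2)$. As observed in the Remark after the proof of Theorem~\ref{tExistence}, the extension \eqref{seqUlrich} is unique up to scalars once $Z$ is fixed, because $\ext^1_S\big(\cI_{Z\vert S}(2h),\cO_S(h+K_S)\big)\cong k$; moreover the proof of Theorem~\ref{tExistence} shows that the Cayley--Bacharach vanishing $h^0\big(S,\cI_{Z\vert S}(h)\big)=0$ holds on a dense open subset $U$ of the Hilbert scheme $\Hilb^{N+2}(S)$. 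Hence $\dim\ext^1_S\big(\cI_{Z\vert S}(2h),\cO_S(h+K_S)\big)$ is constantly $1$ over $U$, the construction of \eqref{seqUlrich} globalises to a flat family of stable bundles over $U$, and this induces a classifying morphism $U\to\cM_S^{s,U}(2;c_1,c_2)$. Since $S$ is a smooth surface, $\Hilb^{N+2}(S)$, hence $U$, hence the image of this morphism, is irreducible; I would then take for $\cU_S$ an irreducible component of $\cM_S^{s,U}(2;c_1,c_2)$ containing that image, so that every $\cE$ as in Theorem~\ref{tExistence} is a point of $\cU_S$.

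\emph{Second step: the dimension bound.} Since $\cM_S^{s,U}(2;c_1,c_2)$ is open in $\cM_S^{s}(2;c_1,c_2)$, the closure of $\cU_S$ is an irreducible component of $\cM_S^{s}(2;c_1,c_2)$ of the same dimension, passing through $[\cE]$. The deformation theory of stable sheaves identifies the Zariski tangent space at $[\cE]$ with $\ext^1_S(\cE,\cE)$ and the obstruction space with $\ext^2_S(\cE,\cE)$, so every irreducible component of $\cM_S^{s}(2;c_1,c_2)$ through $[\cE]$ has dimension at least $\ext^1_S(\cE,\cE)-\ext^2_S(\cE,\cE)$. As $\cE$ is stable, it is simple, so $\ext^0_S(\cE,\cE)=1$, and therefore
$$
\ext^1_S(\cE,\cE)-\ext^2_S(\cE,\cE)=1-\chi\big(\cE^\vee\otimes\cE\big).
$$
Now $\cE^\vee\otimes\cE$ has rank $4$, first Chern class $0$ and second Chern class $4c_2(\cE)-c_1(\cE)^2$; plugging $c_1(\cE)=3h+K_S$, $c_2(\cE)=\frac12(5h^2+3hK_S)+2$ and $\chi(\cO_S)=1$ into Formula~\eqref{RRGeneral} gives $\chi\big(\cE^\vee\otimes\cE\big)=K_S^2-h^2-4$, whence $\dim\cU_S\ge h^2-K_S^2+5$.

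The only genuinely non--formal point is the globalisation in the first step: one has to verify that the extensions \eqref{seqUlrich}, each unique up to a scalar, patch into a single flat family of (stable) sheaves over $U$. This is the standard relative-$\ext$ / projective-bundle construction, and it works precisely because the proof of Theorem~\ref{tExistence} guarantees the constancy of $\dim\ext^1_S\big(\cI_{Z\vert S}(2h),\cO_S(h+K_S)\big)$ over a dense open of $\Hilb^{N+2}(S)$ (equivalently, the vanishing $h^0\big(S,\cI_{Z\vert S}(h)\big)=0$). Everything else, including the Chern-class computation, is routine; note that no smoothness of $\cU_S$ is claimed, so one need not control $\ext^2_S(\cE,\cE)$.
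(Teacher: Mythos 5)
Your proof is correct and follows essentially the same route as the paper: a flat family over the irreducible open locus of the Hilbert scheme of $N+2$ points, openness of stability, and the lower bound $\dim\cU_S\ge \ext^1_S(\cE,\cE)-\ext^2_S(\cE,\cE)=4c_2-c_1^2-3\chi(\cO_S)$, which the paper obtains by citing Theorems 4.5.4 and 4.5.8 of \cite{H--L} rather than unwinding the deformation theory. Your justification of flatness via the constancy of $\dim\ext^1_S\big(\cI_{Z\vert S}(2h),\cO_S(h+K_S)\big)=1$ is in fact more careful than the paper's one--line remark on this point.
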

\begin{proof}
Let $\mathcal H_S^U$ be the open subset of the Hilbert scheme $\mathcal H_S$ of $0$--dimensional subschemes of degree $N+2$ in $S$ corresponding to sets of points in general position in $\p{N}$ with respect to the embedding induced by $\cO_S(h)$. Notice that $\mathcal H_S$ is isomorphic to an open subset of the $(N+2)$--symmetric product of $S$, hence it is irreducible.

Via the construction described in Theorem \ref{tExistence} we obtain a family $\frak E\to \mathcal H_S$ of Ulrich bundles of rank $2$ with Chern classes $c_1$ and $c_2$ satisfying Equalities \eqref{eqUlrichSpecial}. Such a family is flat, because the bundles in the family fits in the same exact sequence. The property of being stable is open in a flat family (see \cite{H--L}, Proposition 2.3.1 and Corollary 1.5.11), hence the subset $\mathcal H_S^{s,U}\subseteq \mathcal H_S^U\subseteq \mathcal H_S$  of points  corresponding to stable bundles is open. 

Thanks to Theorem \ref{tStable} $\mathcal H_S^{s,U}\ne\emptyset$ if $\pi(\cO_S(h))\ge1$. Thus, in this case, we have a morphism $\mathcal H_S^{s,U}\to \cM_S^{s,U}(2;c_1,c_2)$ whose image parameterizes the isomorphism classes of stable bundles constructed in Theorem \ref{tExistence}. In particular such bundles, correspond to points of an irreducible component $\cU_S\subseteq\cM_S^{s,U}(2;c_1,c_2)$.

As pointed out in Theorems 4.5.4 and 4.5.8 of \cite{H--L}, $\dim(\cU_S)\ge4c_2-c_1^2-3\chi(\cO_S)$.
Taking into account the definition of $c_1$ and $c_2$ above, one obtains the dimension of $\cU_S$. 
\end{proof}

The importance of the following proposition is clear. 

\begin{proposition}
\label{pSmoothModuli}
Let $S$ be a surface with $p_g(S)=q(S)=0$ and endowed with a very ample non--special line bundle $\cO_S(h)$.

Then each bundle $\cE$ constructed in Theorem \ref{tExistence} satisfies
$$
h^2\big(S,\cE\otimes\cE^\vee\big)=h^0\big(S,\cO_S(2K_S-h)\big).
$$
In particular, if $h^0\big(S,\cO_S(2K_S-h)\big)=0$ and neither $\cO_S(h)$ embeds $S$ as a rational normal scroll, nor $S\cong\p2$ and $\cO_S(h)\cong\cO_{\p2}(1)$, then $\cU_S$ has dimension $h^2-K_S^2+5$ and it is generically smooth. 
\end{proposition}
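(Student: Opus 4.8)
The plan is to compute $h^2\big(S,\cE\otimes\cE^\vee\big)$ exactly, by feeding the defining sequence \eqref{seqUlrich} into Serre duality, and then to deduce smoothness and the dimension of $\cU_S$ from the standard deformation theory of stable sheaves on a surface.

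For the identity I would argue as follows. Since $\cE\otimes\cE^\vee$ is self--dual, Serre duality gives $h^2\big(S,\cE\otimes\cE^\vee\big)=\ext^2(\cE,\cE)=\ext^0\big(\cE,\cE(K_S)\big)$. Applying $\ext^\bullet\big(-,\cE(K_S)\big)$ to Sequence \eqref{seqUlrich} and using that $\cE$ is Ulrich, hence initialized, so that $\ext^0\big(\cO_S(h+K_S),\cE(K_S)\big)=h^0\big(S,\cE(-h)\big)=0$, one obtains an isomorphism $\ext^0\big(\cI_{Z\vert S}(2h),\cE(K_S)\big)\cong\ext^0\big(\cE,\cE(K_S)\big)$. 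Twisting by the line bundle $\cO_S(-2h)$ rewrites the left--hand side as $\ext^0\big(\cI_{Z\vert S},\cE(K_S-2h)\big)$; since $\cE(K_S-2h)$ is locally free, $S$ is smooth and $Z$ has codimension $2$, every homomorphism $\cI_{Z\vert S}\to\cE(K_S-2h)$ agrees off $Z$ with a section of $\cE(K_S-2h)$ which extends across $Z$, and two such extensions differ by a homomorphism with torsion image into a torsion--free sheaf, hence coincide. Thus $\ext^0\big(\cI_{Z\vert S},\cE(K_S-2h)\big)=h^0\big(S,\cE(K_S-2h)\big)$. Finally, twisting Sequence \eqref{seqUlrich} by $\cO_S(K_S-2h)$ gives
\begin{equation*}
0\longrightarrow\cO_S(2K_S-h)\longrightarrow\cE(K_S-2h)\longrightarrow\cI_{Z\vert S}(K_S)\longrightarrow0,
\end{equation*}
and since $p_g(S)=0$ we have $h^0\big(S,\cI_{Z\vert S}(K_S)\big)\le h^0\big(S,\cO_S(K_S)\big)=0$, whence $h^0\big(S,\cE(K_S-2h)\big)=h^0\big(S,\cO_S(2K_S-h)\big)$. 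Chaining these identifications yields $h^2\big(S,\cE\otimes\cE^\vee\big)=h^0\big(S,\cO_S(2K_S-h)\big)$.

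For the last assertion, assume $h^0\big(S,\cO_S(2K_S-h)\big)=0$ and that neither $\cO_S(h)$ embeds $S$ as a rational normal scroll nor $(S,\cO_S(h))\cong(\p2,\cO_{\p2}(1))$. By Proposition \ref{pComponent} the stable bundles $\cE$ of Theorem \ref{tExistence} represent points of an irreducible component $\cU_S\subseteq\cM_S^{s,U}(2;c_1,c_2)$ with $\dim\cU_S\ge h^2-K_S^2+5$. By the identity just proved, $\ext^2(\cE,\cE)=h^2\big(S,\cE\otimes\cE^\vee\big)=0$, so the obstruction space for the deformations of $\cE$ vanishes and, by \cite{H--L}, Theorem 4.5.4, $\cM_S^{s,U}(2;c_1,c_2)$ is smooth at $[\cE]$ of dimension $\ext^1(\cE,\cE)$. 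Since $\cE$ is stable it is simple, so $\ext^0(\cE,\cE)=1$, and Hirzebruch--Riemann--Roch for $\cE\otimes\cE^\vee$ together with $\chi(\cO_S)=1$ and the Chern class computation of Proposition \ref{pComponent} gives $\ext^1(\cE,\cE)=1-\chi(\cE,\cE)=4c_2-c_1^2-3\chi(\cO_S)=h^2-K_S^2+5$. As the moduli space is smooth, hence locally irreducible, at $[\cE]$, its only component through $[\cE]$ is $\cU_S$; therefore $\dim\cU_S=h^2-K_S^2+5$ and $\cU_S$, being irreducible and containing a smooth point, is generically smooth.

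The step I expect to require the most care is the identification $\ext^0\big(\cI_{Z\vert S},\cE(K_S-2h)\big)=h^0\big(S,\cE(K_S-2h)\big)$: one must genuinely use that the target sheaf is locally free and that $Z$ is $0$--dimensional on a smooth surface, so as to bypass the a priori nonzero group $\ext^1\big(\cO_Z,\cE(K_S-2h)\big)$ that would appear if one instead applied $\ext^\bullet\big(-,\cE(K_S-2h)\big)$ to $0\to\cI_{Z\vert S}\to\cO_S\to\cO_Z\to0$. Everything else is a bookkeeping of twists in which each auxiliary cohomology group vanishes precisely because $p_g(S)=0$ and $\cE$ is initialized.
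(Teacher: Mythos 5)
Your proof is correct and follows essentially the same route as the paper: both reduce $h^2\big(S,\cE\otimes\cE^\vee\big)$ via Serre duality to $h^0\big(S,\cE\otimes\cE^\vee(K_S)\big)$, then to $h^0\big(S,\cE(K_S-2h)\big)$ using Sequence \eqref{seqUlrich} paired against $\cE$ (the paper tensors by $\cE^\vee(K_S)\cong\cE(-3h)$ and bounds $h^0\big(S,\cI_{Z\vert S}\otimes\cE(-h)\big)\le h^0\big(S,\cE(-h)\big)=0$, whereas you apply $\Hom(-,\cE(K_S))$ and invoke $\Hom(\cI_{Z\vert S},\cF)\cong H^0(S,\cF)$ for $\cF$ locally free and $Z$ of codimension $2$ -- a minor technical variant), and finally to $h^0\big(S,\cO_S(2K_S-h)\big)$ via the same twisted sequence. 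The concluding deformation--theoretic argument (vanishing obstruction space, $\ext^1(\cE,\cE)=4c_2-c_1^2-3\chi(\cO_S)=h^2-K_S^2+5$, Theorem 4.5.8 of \cite{H--L}) matches the paper's.
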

\begin{proof}
Tensoring Sequence \eqref{seqUlrich} by $\cO_S(K_S-2h)$ we obtain
$$
0\longrightarrow\cO_S(2K_S-h)\longrightarrow\cE(K_S-2h)\longrightarrow\cI_{Z\vert S}(K_S)\longrightarrow0.
$$
We have $h^0\big(S,\cI_{Z\vert S}(K_S)\big)\le h^0\big(S,\cO_S(K_S)\big)=p_g(S)=0$. The cohomology of the above sequence then yields $h^0\big(S,\cE(K_S-2h)\big)=h^0\big(S,\cO_S(2K_S-h)\big)$. 

Tensoring Sequence \eqref{seqUlrich} by $\cE^\vee(K_S)\cong\cE(-3h)$ we also obtain the exact sequence 
$$
0\longrightarrow\cE(K_S-2h)\longrightarrow\cE\otimes\cE^\vee(K_S)\longrightarrow\cI_{Z\vert S}\otimes\cE(-h)\longrightarrow0.
$$
Trivially $h^0\big(S,\cI_{Z\vert S}\otimes\cE(-h)\big)\le h^0\big(S,\cE(-h)\big)=0$. Thus 
$$
h^2\big(S,\cE\otimes\cE^\vee\big)=h^0\big(S,\cE\otimes\cE^\vee(K_S)\big)=h^0\big(S,\cE(K_S-2h)\big)=h^0\big(S,\cO_S(2K_S-h)\big).
$$

Now assume that $\pi(\cO_S(h))\ge1$ and $h^0\big(S,\cO_S(2K_S-h)\big)=0$. Theorem \ref{tStable} implies that the general $\cE$ as above corresponds to a point in $\cU_S\subseteq\cM_S^{s,U}(2;c_1,c_2)$. Such a point is smooth and $\dim(\cU_S)=4c_2-c_1^2-3\chi(\cO_S)=h^2-K_S^2+5$, thanks to Theorem 4.5.8 of \cite{H--L}. 
\end{proof}

\begin{example}
\label{eEnriques}
Let $S$ be an {\sl Enriques surface}, i.e. a surface with $q(S)=p_g(S)=0$, trivial bicanonical class and which is also {\sl minimal}, i.e. it does not contain curves $E\cong\p1$ with $E^2=-1$. 

Since $\cO_S(h-K_S)$ is ample, it follows from the Kodaira vanishing theorem that  $h^1\big(S,\cO_S(h)\big)=0$. Moreover $2\pi(\cO_S(h))=h^2+2\ge2$. Theorems \ref{tExistence} and \ref{tStable} yields the existence of a stable special Ulrich bundle $\cE$ of rank $2$ on $S$. 

Notice that $h^0\big(S,\cO_S(2K_S-h)\big)=h^0\big(S,\cO_S(-h)\big)=0$ in this case, hence Propositions \ref{pComponent} and \ref{pSmoothModuli} imply that $\cU_S$ is generically smooth of dimension $h^2+5$.
\end{example}

\begin{remark}
\label{rSmoothModuli}
We now examine Beauville's hypothesis from the stability viewpoint. If $h^0\big(S,\cO_S(h-K_S)\big)\ne0$, then the cohomology of Sequence \eqref{seqCurve} tensored by $\cO_S(K_S)$ yields $h^0\big(S,\cO_S(2K_S-h)\big)\le p_g(S)=0$ and Proposition \ref{pSmoothModuli} holds in this case. 

If $h^0\big(S,\cO_S(h-K_S)\big)=0$, then the cohomology of Sequence \eqref{seqUlrich} tensored by $\cO_S(-h-K_S)$ implies that 
\begin{align*}
1=h^0\big(S,\cO_S\big)&\le h^0\big(S,\cE(-h-K_S)\big)= \\
&=1+h^0\big(S,\cI_{Z\vert S}(h-K_S)\big)\le 1+h^0\big(S,\cO_S(h-K_S)\big)=1.
\end{align*}
It follows that the map $\mathcal H_S^{s,U}\to\cM^{s,U}_S(2;c_1,c_2)$ is  injective when $\pi(\cO_S(h))\ge1$, hence 
$\dim(\cU_S)\ge2(N+2)=h^2-hK_S+4$.
\end{remark}

\section{Ulrich--wildness}
\label{sWild}
In this section we will deal with the Ulrich--wildness of the surfaces endowed with a very ample non--special line bundle.
We will make use of the following result.

\begin{theorem}
\label{tFPL}
Let $X$ be a smooth variety endowed with a very ample line bundle $\cO_X(h)$.

If $\cA$ and $\cB$ are simple Ulrich bundles on $X$ such that $h^1\big(X,\cA\otimes\cB^\vee\big)\ge3$ and $h^0\big(X,\cA\otimes\cB^\vee\big)=h^0\big(X,\cB\otimes\cA^\vee\big)=0$, then $X$ is Ulrich--wild.
\end{theorem}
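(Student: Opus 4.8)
This theorem is a standard device for producing wild representation type, and the plan is to deduce it from the wildness of the generalized Kronecker quiver. First I would reformulate the hypotheses: since $\cB$ (resp.\ $\cA$) is locally free we have $\ext^i_X(\cB,\cA)\cong H^i(X,\cA\otimes\cB^\vee)$ and $\ext^i_X(\cA,\cB)\cong H^i(X,\cB\otimes\cA^\vee)$, so what is assumed is precisely $\operatorname{Hom}_X(\cA,\cB)=\operatorname{Hom}_X(\cB,\cA)=0$ and $n:=\dim_k\ext^1_X(\cB,\cA)\ge 3$; moreover $\operatorname{End}_X(\cA)=\operatorname{End}_X(\cB)=k$ because $\cA$ and $\cB$ are simple. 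Write $V:=\ext^1_X(\cB,\cA)$ and let $\operatorname{rep}(K_n)$ be the category of finite--dimensional representations of the quiver $K_n$ with two vertices $0,1$ and $n$ arrows $1\to 0$; an object is a triple $\rho=(U_0,U_1,\{M_a\}_{a=1}^{n})$ with $U_0,U_1$ finite--dimensional $k$--spaces and $M_a\in\operatorname{Hom}_k(U_1,U_0)$. The goal is to construct a fully faithful functor $\Phi$ from $\operatorname{rep}(K_n)$ to the category of Ulrich bundles on $X$, compatible with algebraic families, and then to invoke the wildness of $K_n$.

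For the construction, fix a basis $e_1,\dots,e_n$ of $V$. To $\rho=(U_0,U_1,\{M_a\})$ attach the class
$$
\varepsilon_\rho:=\sum_{a=1}^{n}e_a\otimes M_a\ \in\ V\otimes_k\operatorname{Hom}_k(U_1,U_0)\ \cong\ \ext^1_X(\cB\otimes_kU_1,\cA\otimes_kU_0),
$$
and let $\cF_\rho$ be the middle term of the extension
$$
0\longrightarrow\cA\otimes_kU_0\longrightarrow\cF_\rho\longrightarrow\cB\otimes_kU_1\longrightarrow 0.
$$
Since $\cA\otimes_kU_0\cong\cA^{\oplus\dim U_0}$ and $\cB\otimes_kU_1\cong\cB^{\oplus\dim U_1}$ are vector bundles, $\cF_\rho$ is a vector bundle; and since the defining cohomological vanishings of an Ulrich bundle pass from the two outer terms of a short exact sequence to the middle one, $\cF_\rho$ is Ulrich. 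A morphism $(\phi_0,\phi_1)\colon\rho\to\rho'$, i.e.\ a pair with $\phi_0M_a=M'_a\phi_1$ for all $a$, induces by functoriality of extensions a morphism $\cF_\rho\to\cF_{\rho'}$ fitting in a ladder between the two sequences; this defines $\Phi$ on morphisms. Performing the same recipe relatively over a parameter scheme $T$ (replacing $U_0,U_1$ by vector bundles on $T$ and the $M_a$ by bundle maps) produces a sheaf on $X\times T$, flat over $T$, whose fibres are the corresponding $\cF_{\rho_t}$, so $\Phi$ carries algebraic families to algebraic families.

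The crucial step is full faithfulness of $\Phi$, and it is here that all four hypotheses are used. Let $f\colon\cF_\rho\to\cF_{\rho'}$. Composing $\cA\otimes_kU_0\hookrightarrow\cF_\rho$, then $f$, then the surjection $\cF_{\rho'}\twoheadrightarrow\cB\otimes_kU'_1$ gives an element of $\operatorname{Hom}_X(\cA\otimes_kU_0,\cB\otimes_kU'_1)\cong\operatorname{Hom}_X(\cA,\cB)\otimes_k\operatorname{Hom}_k(U_0,U'_1)=0$; hence $f(\cA\otimes_kU_0)\subseteq\cA\otimes_kU'_0$, so $f$ restricts to a map of subbundles giving $\phi_0\in\operatorname{Hom}_k(U_0,U'_0)$ via $\operatorname{End}_X(\cA)=k$, and descends to a map of quotients giving $\phi_1\in\operatorname{Hom}_k(U_1,U'_1)$ via $\operatorname{End}_X(\cB)=k$; compatibility of $f$ with the extensions makes $(\phi_0,\phi_1)$ a morphism $\rho\to\rho'$. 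Conversely, a morphism $(\phi_0,\phi_1)$ lifts to some $f$ because the obstruction $(\phi_0)_*\varepsilon_\rho-(\phi_1)^*\varepsilon_{\rho'}\in\ext^1_X(\cB\otimes_kU_1,\cA\otimes_kU'_0)$ is exactly the failure of the relations $\phi_0M_a=M'_a\phi_1$, hence $0$; and such $f$ is unique, since two lifts differ by a map factoring through $\operatorname{Hom}_X(\cB\otimes_kU_1,\cA\otimes_kU'_0)\cong\operatorname{Hom}_X(\cB,\cA)\otimes_k\operatorname{Hom}_k(U_1,U'_0)=0$. Thus $\operatorname{Hom}_X(\cF_\rho,\cF_{\rho'})\cong\operatorname{Hom}_{\operatorname{rep}(K_n)}(\rho,\rho')$ naturally; in particular $\operatorname{End}_X(\cF_\rho)\cong\operatorname{End}(\rho)$, so $\cF_\rho$ is indecomposable whenever $\rho$ is, and $\cF_\rho\cong\cF_{\rho'}$ forces $\rho\cong\rho'$.

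Finally, since $n\ge 3$ the path algebra of $K_n$ is wild hereditary: it is classical that for every $p$ there is a $p$--dimensional algebraic family of pairwise non--isomorphic indecomposable representations of $K_n$ of a fixed dimension vector. Applying the relative form of $\Phi$ to such a family produces, for every $p$, a $p$--dimensional flat family of pairwise non--isomorphic indecomposable Ulrich bundles on $X$; that is, $X$ is Ulrich--wild. The construction of $\Phi$ and the check that the $\cF_\rho$ are Ulrich are routine; the main obstacle is the $\operatorname{Hom}$--computation yielding full faithfulness, which must be carried out carefully so that all of $\operatorname{Hom}_X(\cA,\cB)=\operatorname{Hom}_X(\cB,\cA)=0$ and $\operatorname{End}_X(\cA)=\operatorname{End}_X(\cB)=k$ are genuinely used, together with the verification that $\Phi$ transports algebraic families, so that wildness in the quiver sense matches the geometric definition of Ulrich--wildness adopted here.
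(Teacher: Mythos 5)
Your argument is correct. The paper does not prove this statement itself---it simply cites Theorem 1 and Corollary 1 of \cite{F--PL}---and your reconstruction, via the fully faithful functor from representations of the $n$--Kronecker quiver (with $n=\dim_k\ext^1_X(\cB,\cA)\ge3$) to self--extensions of copies of $\cA$ and $\cB$, together with the $\operatorname{Hom}$--computation using simplicity and the two vanishings, is essentially the argument of that reference.
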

\begin{proof}
See \cite{F--PL}, Theorem 1 and Corollary 1.
\end{proof}

An immediate consequence of the above theorem is the following easy result.

\begin{lemma}
\label{lWild}
Let $S$ be a surface with $p_g(S)=q(S)=0$ and endowed with a very ample non--special line bundle $\cO_S(h)$.

If $\pi(\cO_S(h))\ge1$ and $h^2+1\ge K_S^2$, then $S$ is Ulrich--wild.
\end{lemma}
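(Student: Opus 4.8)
The strategy is to apply Theorem \ref{tFPL} with $\cA$ and $\cB$ two stable (hence simple) special Ulrich bundles of rank $2$ on $S$, both obtained from the construction of Theorem \ref{tExistence} for two general sets of points $Z_1, Z_2 \subseteq S$. First I would observe that the hypotheses $\pi(\cO_S(h))\ge1$ and $p_g(S)=q(S)=0$ put us outside the exceptional cases of Theorem \ref{tStable} (the minimal-degree cases all have $\pi(\cO_S(h))=0$), so for general $Z_1$ and $Z_2$ the bundles $\cA := \cE_{Z_1}$ and $\cB := \cE_{Z_2}$ are stable, and by Theorem \ref{tUnstable}(b) $\mu$-stable; in particular each is simple, so $h^0\big(S,\cA\otimes\cA^\vee\big)=h^0\big(S,\cB\otimes\cB^\vee\big)=1$. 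Since $\cA,\cB$ both have the same Chern classes given by Corollary \ref{cUlrichSpecial}, they have the same slope $\mu(\cA)=\mu(\cB)$; stability then forces $h^0\big(S,\cA\otimes\cB^\vee\big)=h^0\big(S,\cB\otimes\cA^\vee\big)=0$ for $Z_1\ne Z_2$ general (a nonzero map between $\mu$-stable bundles of equal slope and equal rank is an isomorphism, which can be excluded since $\cA\not\cong\cB$ for a generic choice — alternatively one checks this directly from the defining sequences, as in Remark \ref{rSmoothModuli}).

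The heart of the matter is then to verify the inequality $h^1\big(S,\cA\otimes\cB^\vee\big)\ge 3$. For this I would compute $\chi\big(S,\cA\otimes\cB^\vee\big)$ via Riemann--Roch \eqref{RRGeneral}: the bundle $\cA\otimes\cB^\vee$ has rank $4$, first Chern class $2c_1(\cA)-2c_1(\cB)=0$ (both equal $3h+K_S$), and $c_2\big(\cA\otimes\cB^\vee\big)$ is a universal polynomial in the Chern classes of $\cA$ and $\cB$; using $c_1(\cA)=c_1(\cB)=3h+K_S$ and $c_2(\cA)=c_2(\cB)=\tfrac12(5h^2+3hK_S)+2$ this reduces to an explicit number. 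One finds $\chi\big(S,\cA\otimes\cB^\vee\big)=4\chi(\cO_S)-\big(4c_2(\cA)-c_1(\cA)^2\big)$, which after substituting the Ulrich Chern classes becomes $4-(h^2-K_S^2+5) = K_S^2-h^2-1$. Now by Proposition \ref{pSmoothModuli} applied with $\cB$ in place of $\cE$ — or rather its $h^2$-computation — we have $h^2\big(S,\cA\otimes\cB^\vee\big)=h^0\big(S,\cB\otimes\cA^\vee(K_S)\big)$, and since $\cB\otimes\cA^\vee(K_S)=(\cA^\vee(2h+K_S))\otimes(\cB(-2h))$ and $h^0\big(S,\cA^\vee(2h+K_S)\big)=0$ because $\cA$ is Ulrich (Proposition \ref{pUlrich}(d)), the same argument as in the proof of Proposition \ref{pSmoothModuli} gives $h^2\big(S,\cA\otimes\cB^\vee\big)=h^0\big(S,\cO_S(2K_S-h)\big)\le h^0\big(S,\cO_S(K_S)\big)=0$ when $h^0\big(S,\cO_S(h-K_S)\big)\ne 0$; in general one bounds $h^2\big(S,\cA\otimes\cB^\vee\big)$ by $h^0\big(S,\cO_S(2K_S-h)\big)$, which vanishes since $2K_S-h$ cannot be effective (if it were, $h+2(-K_S)$ or similar positivity considerations, together with $h$ very ample and $\pi(\cO_S(h))\ge 1$, give a contradiction; concretely $h^0\big(S,\cO_S(2K_S-h)\big)=h^2\big(S,\cO_S(2h-K_S)\big)=0$ by Kodaira vanishing since $\cO_S(2h-2K_S)$ is ample). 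Hence
$$
h^1\big(S,\cA\otimes\cB^\vee\big)=h^0\big(S,\cA\otimes\cB^\vee\big)+h^2\big(S,\cA\otimes\cB^\vee\big)-\chi\big(S,\cA\otimes\cB^\vee\big)=0+0-(K_S^2-h^2-1)=h^2+1-K_S^2\ge 3 ?
$$
which gives $h^1\ge 3$ precisely when $h^2+1-K_S^2\ge 3$.

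At this point I notice the stated hypothesis is only $h^2+1\ge K_S^2$, i.e. $h^1\big(S,\cA\otimes\cB^\vee\big)\ge 0$ from the Euler-characteristic count alone is not enough — so the main obstacle, and the step requiring the most care, is to gain the extra $3$. The way around this is the standard trick used in \cite{F--PL}-type arguments: instead of a single pair one takes $\cA=\cE_{Z_1}$ and $\cB=\cE_{Z_2}\oplus\cE_{Z_3}\oplus\cdots$, or more efficiently one replaces $\cB$ by a twist/extension, or — cleanest here — one uses that for a \emph{flat family} of stable Ulrich bundles of positive-dimensional base (which exists by Proposition \ref{pComponent}, since $\dim\cU_S\ge h^2-K_S^2+5\ge 5>0$) the groups $\mathrm{Ext}^1$ among distinct members are forced to be large: if $h^1\big(S,\cA\otimes\cB^\vee\big)$ were $\le 2$ for general $\cA,\cB\in\cU_S$ then the dimension estimate $\dim\cU_S\le h^1\big(S,\cE\otimes\cE^\vee\big)=4c_2-c_1^2-3\chi(\cO_S)+h^2\big(S,\cE\otimes\cE^\vee\big)$ would be contradicted — more precisely, for $\cA\ne\cB$ in the same irreducible component one has $h^1\big(S,\cA\otimes\cB^\vee\big)\ge \dim\cU_S - h^0 + h^2 \ge (h^2-K_S^2+5) - 1 + 0$, and then $h^2+1\ge K_S^2$ yields $h^1\big(S,\cA\otimes\cB^\vee\big)\ge 3$. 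So the final step is: invoke Proposition \ref{pComponent} to get $\cU_S$ of dimension $\ge h^2-K_S^2+5$, pick two general distinct points $\cA,\cB\in\cU_S$, use the deformation-theoretic lower bound on $h^1\big(S,\cA\otimes\cB^\vee\big)$ together with the vanishing $h^2\big(S,\cA\otimes\cB^\vee\big)=h^0\big(S,\cO_S(2K_S-h)\big)=0$ and $h^0\big(S,\cA\otimes\cB^\vee\big)=h^0\big(S,\cB\otimes\cA^\vee\big)=0$, conclude $h^1\ge 3$ from $h^2+1\ge K_S^2$, and apply Theorem \ref{tFPL}. The delicate point to nail down rigorously is the lower bound $h^1\big(S,\cA\otimes\cB^\vee\big)\ge \dim\cU_S - 1$ for two general distinct members of the component, which follows from upper-semicontinuity and the fact that $\cU_S$ is an irreducible component of the moduli space whose tangent space at a smooth point is $\mathrm{Ext}^1(\cE,\cE)$ of dimension $\le h^1\big(S,\cE\otimes\cE^\vee\big) = h^1\big(S,\cA\otimes\cB^\vee\big)$ (in the limit $\cB\to\cA$).
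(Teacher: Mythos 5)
Your overall strategy is the paper's: take two non--isomorphic stable bundles from the component $\cU_S$ produced by Theorems \ref{tExistence}, \ref{tStable} and Proposition \ref{pComponent}, note that stability kills $h^0\big(S,\cA\otimes\cB^\vee\big)$ and $h^0\big(S,\cB\otimes\cA^\vee\big)$, bound $h^1\big(S,\cA\otimes\cB^\vee\big)$ from below via Riemann--Roch, and feed everything into Theorem \ref{tFPL}. The problem is a numerical slip in the Riemann--Roch step which then sends you on an unnecessary and unsound detour. You substitute $4c_2-c_1^2=h^2-K_S^2+5$; but that is the value of $4c_2-c_1^2-3\chi(\cO_S)$ (the expected dimension of the moduli space). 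With $c_1=3h+K_S$ and $c_2=\tfrac12(5h^2+3hK_S)+2$ one gets $4c_2-c_1^2=h^2-K_S^2+8$, hence
$$
\chi\big(\cA\otimes\cB^\vee\big)=4\chi(\cO_S)-\big(4c_2-c_1^2\big)=K_S^2-h^2-4,
$$
and therefore $h^1\big(S,\cA\otimes\cB^\vee\big)=h^0+h^2-\chi\ge-\chi=h^2-K_S^2+4\ge3$ directly from the hypothesis $h^2+1\ge K_S^2$, using only $h^2\big(S,\cA\otimes\cB^\vee\big)\ge0$. This is exactly how the paper concludes; neither the vanishing of $h^2\big(S,\cA\otimes\cB^\vee\big)$ nor any gain of ``an extra $3$'' is needed.

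The workaround you append to bridge the phantom gap does not stand on its own. First, the claimed vanishing $h^0\big(S,\cO_S(2K_S-h)\big)=0$ is argued via a misstated Serre duality ($h^0\big(S,\cO_S(2K_S-h)\big)$ is dual to $h^2\big(S,\cO_S(h-K_S)\big)$, not to $h^2\big(S,\cO_S(2h-K_S)\big)$) and an ampleness assumption on $\cO_S(h-2K_S)$ that is not among the hypotheses; in any case Proposition \ref{pSmoothModuli} only establishes $h^2\big(S,\cE\otimes\cE^\vee\big)=h^0\big(S,\cO_S(2K_S-h)\big)$ for the \emph{self}-tensor, and transplanting that computation to $\cA\otimes\cB^\vee$ with $Z_1\ne Z_2$ is not justified as written. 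Second, and more seriously, the ``deformation-theoretic lower bound'' $h^1\big(S,\cA\otimes\cB^\vee\big)\ge\dim\cU_S-1$ for general distinct $\cA,\cB\in\cU_S$ is not a standard fact: semicontinuity on $\cU_S\times\cU_S$ bounds the generic value of $h^1$ \emph{above} by its value on the diagonal, which is the wrong direction, and $\dim\cU_S\le\mathrm{ext}^1(\cA,\cA)$ controls only the self-extensions. Once you correct the computation of $4c_2-c_1^2$, delete the entire second half of your argument and you recover the paper's proof.
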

\begin{proof}
Thanks to Theorems \ref{tExistence}, \ref{tStable}, Proposition \ref{pComponent} and the hypothesis on $h^2$ we already know the existence of an irreducible component $\cU_S\subseteq\cM_S^{s}(2;c_1,c_2)$ of dimension at least $h^2-K_S^2+5\ge4$, whose points correspond to special stable Ulrich bundles of rank $2$. 

Let $\cE$ and $\cG$ bundles corresponding to distinct points in $\cU_S$. Thanks to \cite{H--L}, Corollary 1.2.8, both $\cE$ and $\cG$, being stable, are simple. Proposition 1.2.7 of \cite{H--L} implies $h^0\big(F,\cE\otimes\cG^\vee\big)=h^0\big(F,\cG\otimes\cE^\vee\big)=0$, thus
$$
h^1\big(F,\cE\otimes\cG^\vee\big)=h^2\big(F,\cE\otimes\cG^\vee\big)-\chi(\cE\otimes\cG^\vee)\ge -\chi(\cE\otimes\cG^\vee).
$$
Equality \eqref{RRGeneral} with $\cF:=\cE\otimes\cG^\vee$ and the equalities $\rk(\cE\otimes\cG^\vee)=4$, $c_1(\cE\otimes\cG^\vee)=0$ and $c_2(\cE\otimes\cG^\vee)=4c_2-c_1^2$ imply
$$
h^1\big(F,\cE\otimes\cG^\vee\big)\ge 4c_2-c_1^2-4\chi(\cO_S)=h^2-K_S^2+4\ge3.
$$ 
We conclude that $S$ is Ulrich--wild, by Theorem \ref{tFPL}.
\end{proof}

\medbreak
\noindent{\it Proof of Theorem \ref{tWild}.}
Let $S$ be a surface with $p_g(S)=q(S)=0$ and assume first that $\pi(\cO_S(h))=0$. In this case $S\subseteq \p{N}$ has minimal degree (see Equality \eqref{eqDimension}). Using the notation of Example \ref{eRational}, either $S\cong\p2$ and $\cO_S(h)\cong\cO_{\p2}(\lambda)$ with $\lambda\le2$, or $S\cong\bF_e$ and $\cO_S(h)\cong\cO_{\bF_e}(\xi+bf)$ (see e.g. \cite{G--H}, Proposition at p. 525). 

In the first case $h^2\le4$ and $S$ supports a finite number of Ulrich bundles (see \cite{E--He}). In the second case such surfaces are Ulrich--wild if and only if $h^2\ge5$ thanks to \cite{MR} (see also \cite{F--M} for some comments on the proof in \cite{MR}).

If $\pi(\cO_S(h))\ge1$ and $h^2\le7$, then the list given in \cite{Io2} and the hypothesis $p_g(S)=q(S)=0$ yield that  $h^2\ge K_S^2$. Thus the statement follows immediately from Lemma \ref{lWild}. 

Assume $\pi(\cO_S(h))\ge1$ and $h^2\ge8$. Recall that $S$ is the blow up of a minimal surface $\Sigma$ with $p_g(\Sigma)=q(\Sigma)=0$. The classification of such surfaces  is part of the more general Enriques--Kodaira classification (see Chapter VI of \cite{B--H--P--VV}). If $\kappa(\Sigma)$ is the Kodaira dimension of $\Sigma$, we have the following possible cases for the aforementioned surfaces.
\begin{itemize}
\item If $\kappa(\Sigma)=-\infty$, then $\Sigma$ is either $\p2$, or $\Bbb F_e:=\Bbb P(\cO_{\p1}\oplus\cO_{\p1}(-e))$ for some integer $e\ge0$, $e\ne1$. In this case $K_{\Sigma}^2=9,8$ according $\Sigma\cong\p2$ or not.
\item If $0\le \kappa(\Sigma)\le1$, then $\Sigma$ is elliptic. In this case $K_{\Sigma}^2=0$.
\item If $\kappa(\Sigma)=2$, then $\Sigma$ is of general type. In this case $1\le K_{\Sigma}^2\le9$ (see \cite{B--H--P--VV}, Section VII.10).
\end{itemize}

We deduce that $K_S^2\le9$ in this case, hence the statement is again an immediate consequence of Lemma \ref{lWild}.
\qed
\medbreak

\begin{corollary}
Let $S\subseteq\p N$ be an Enriques surface. Then $S$ is Ulrich--wild.
\end{corollary}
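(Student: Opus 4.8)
The plan is to deduce the corollary directly from Theorem \ref{tWild}, so that the only work is to verify that its hypotheses hold for an arbitrary very ample polarisation $\cO_S(h)$ on an Enriques surface $S$. By definition $S$ satisfies $q(S)=p_g(S)=0$, hence it suffices to show that $\cO_S(h)$ is non--special and that $\pi(\cO_S(h))\ge1$; both facts are essentially already contained in Example \ref{eEnriques}.

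First I would check non--speciality. Since $S$ has trivial bicanonical class, $K_S$ is $2$--torsion in $\Pic(S)$, hence numerically trivial; in particular $\cO_S(h-K_S)$ is ample, its square being $\cO_S(2h)$, the square of the very ample bundle $\cO_S(h)$. Applying the Kodaira vanishing theorem to the ample line bundle $\cO_S(h-K_S)$ gives $h^1\big(S,\cO_S(h)\big)=h^1\big(S,\cO_S(K_S+(h-K_S))\big)=0$, so $\cO_S(h)$ is non--special.

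Next, the numerical triviality of $K_S$ yields $hK_S=0$, whence $\pi(\cO_S(h))=\tfrac12(h^2+hK_S)+1=\tfrac12h^2+1$. As $\cO_S(h)$ is ample we have $h^2>0$ (indeed $h^2$ is even by adjunction, since $hK_S=0$), so $\pi(\cO_S(h))\ge2>0$. Therefore the first alternative in Theorem \ref{tWild} is satisfied, and $S$ is Ulrich--wild.

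I do not expect any genuine obstacle here: the corollary is a straightforward specialisation of Theorem \ref{tWild}, the only extra ingredient being the elementary observation — already exploited in Example \ref{eEnriques} — that $2K_S\sim0$ simultaneously forces $\cO_S(h)$ to be non--special (via Kodaira vanishing for the ample bundle $\cO_S(h-K_S)$) and makes the sectional genus strictly positive. The single point worth a word of care is the ampleness of $\cO_S(h-K_S)$, which follows from $2(h-K_S)\sim 2h$ together with the numerical characterisation of ampleness.
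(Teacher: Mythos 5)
Your proof is correct and follows essentially the same route as the paper: the paper's proof also reduces to Theorem \ref{tWild} by citing Example \ref{eEnriques}, where non--speciality is obtained exactly via Kodaira vanishing for the ample line bundle $\cO_S(h-K_S)$, and the positivity of $\pi(\cO_S(h))$ from $hK_S=0$. Your added justification that $\cO_S(h-K_S)$ is ample because $2(h-K_S)\sim 2h$ is a welcome (and correct) elaboration of a point the paper leaves implicit.
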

\begin{proof}
As already pointed out in Example \ref{eEnriques}, $\cO_S(h)$ is non--special. Moreover, $h^2\ge8$ by Remark \ref{rh^2}, because $hK_S=0$.
\end{proof}

An analogous result can be found in \cite{B--N} when $\cO_S(h)$ is a multiple of the Fano polarisation (see the aforementioned paper for the definition and details about such a polarisation) and $S$ is {\sl unnodal}, i.e. $S$ does not contain curves $E\cong\p1$ such that $E^2=-2$. 

Notice that a linearly normal Enriques surface $S\subseteq\p N$ is not necessarily aCM (see \cite{G--L--M}).

\begin{example}
\label{eAlexander}
Let $S\subseteq\p4$ be a non--special surface with $p_g(S)=q(S)=0$ and {\sl non--degenerate}, i.e. not contained in any hyperplane. Then the embedding is induced by a monomorphism $H^0\big(\p4,\cO_{\p4}(1)\big)\to H^0\big(S,\cO_{S}(h)\big)$. 

The Veronese surface in $\p4$, i.e. the projection from a general point of $\p5$ of the image of $\p2$ via the map defined by $\cO_{\p2}(2)$ is the unique smooth surface in $\p4$ which is not linearly normal (see \cite{Se}).

In \cite{Al}, Th\'eor\ga eme (1), the author gives the complete list of non--degenerate non--special linearly normal surfaces when they are rational. Indeed $S$ is the blow up $\Bl_X\p2$ of $\p2$ at a suitable set of distinct points  $X:=\{\ Q_1,\dots,Q_n,P_1,\dots,P_m\ \}$. Let $\pi\colon \Bl_X\p2\to\p2$ be the canonical projection, and denote by $\ell$ the pull back to $\Bl_X\p2$ of the class of a general line in $\p2$ and by $f_i:=\pi^{-1}(Q_i)$, $e_j:=\pi^{-1}(P_j)$ the exceptional divisor on $\Bl_X\p2$. Then the linearly normal non--degenerate non--special rational surface in $\p4$ are exactly the ones in Table 1 below.

The cohomology of Sequence \eqref{seqHyperplane} tensored by $\cO_S(h)$ implies that non--special surfaces $S\subseteq \p4$ with $p_g(S)=0$ are also {\sl sectionally non--special}, i.e. their general hyperplane section $H$ satisfies $h^1\big(H,\cO_H\otimes\cO_S(h)\big)=0$.
Thanks to the results listed in \cite{I--M} and \cite{M--R} we know that $S$ is:
\begin{itemize}
\item either the Veronese surface in $\p4$;
\item or one of the rational surfaces listed in Table 1;
\item or $d=9$ and there is a blow up morphism $\pi\colon S\to \Sigma$ at a point $P$ of an Enriques surface $\Sigma$, and $\cO_S(h)\cong\cO_S(\pi^*L-\pi^{-1}(P))$ for a suitable very ample line bundle $\cO_\Sigma(L)$ with $L^2=10$: in this case $K_S^2=-1$ and $h^2=9$.
\end{itemize}

In all the cases  $S$ supports special Ulrich bundles of rank $2$ by Theorem \ref{tStable}. Moreover, $S$ supports stable special Ulrich bundles of rank $2$ if and only if $h^2\ge4$. Similarly $S$ is Ulrich--wild if and only if $h^2\ge4$ and it is not the Veronese surface.

When $4\le h^2\le 6$ and $S$ is not the Veronese surface, the existence of stable special Ulrich bundles of rank $2$ on $S$ and its Ulrich--wildness  were proved in \cite{MR--PL1} as part of a more general result: indeed this is exactly the case when the embedded surface $S$ is aCM (see the aforementioned paper and the references therein for the details).  When $d\ge7$, the surface $S$ is no more aCM.

\begin{gather*}
\text{Linearly normal non--degenerate non--special rational surface in $\p4$.}\\
\begin{array}{|c|c|c|}           \hline
X&  h&K_S^2   \\ \hline
\{\ P_1\ \}&2\ell-e_1&8\\  \hline
\{\ P_1,\dots,P_5\ \}&3\ell-\sum_{j=1}^4e_j&4\\  \hline
\{\ Q_1, P_1,\dots,P_7\ \}&4\ell-2f_1-\sum_{j=1}^7e_j&1\\  \hline
\{\  P_1,\dots,P_{10}\ \}&4\ell-\sum_{i=1}^{10}e_i&-1\\  \hline
\{\  Q_1,\dots,Q_6,P_1,\dots,P_5\ \}&6\ell-2\sum_{i=1}^6f_i-\sum_{j=1}^{5}e_j&-2\\  \hline
\{\  Q_1,\dots,Q_{10},P_1\ \}&7\ell-2\sum_{i=1}^{10}f_i-e_1&-2\\  \hline
\{\  Q_1,\dots,Q_{10}\ \}&13\ell-4\sum_{i=1}^{10}f_j&-1\\  \hline
\end{array}
\end{gather*}
\end{example}

\section{Anticanonical rational surfaces}
\label{sAnticanonical}
In this section we will deal with the anticanonical surfaces. E.g. each Hirzebruch surface $\bF_e$ or each blow up of $\p2$ at any set of points lying on a cubic curve: in particular, each linearly normal non--special surface of degree up to $5$ in $\p4$ is anticanonical (see Example \ref{eAlexander}). 

\medbreak
\noindent{\it Proof of Theorem \ref{tAnticanonical}.}
Let $S$ be an anticanonical rational surface: we have that $q(S)=p_g(S)=0$. If $A\in\vert -K_S\vert$, then $\omega_A\cong\cO_A$ by the adjunction formula, hence $h^1\big(A,\cO_S(h)\otimes\cO_A\big)=h^0\big(A,\cO_S(-h)\otimes\cO_A\big)$. 

On the one hand, if $h^0\big(A,\cO_S(-h)\otimes\cO_A\big)>0$, then $h^0\big(C,\cO_S(-h)\otimes\cO_C\big)>0$ for some irreducible component $C\subseteq A$, hence $-hC\ge0$. On the other hand we know that $-hC<0$, because $\cO_S(h)$ is ample. 

We deduce that $h^0\big(A,\cO_S(-h)\otimes\cO_A\big)=0$, hence the cohomology of the exact sequence
$$
0\longrightarrow \cO_S(h+K_S)\longrightarrow \cO_S(h)\longrightarrow \cO_S(h)\otimes\cO_A\longrightarrow0
$$
and the Kodaira vanishing theorem yield $h^1\big(S,\cO_S(h)\big)=h^2\big(S,\cO_S(h)\big)=0$, i.e. $\cO_S(h)$ is non--special. It follows from Theorem \ref{tExistence} that $S$ supports special Ulrich bundles of rank $2$. Theorem \ref{tStable} implies that such bundles are also stable if $\pi(\cO_S(h))\ge1$. The assertion about their moduli space follows from Proposition 3.11 of \cite{C--MR}. 
\qed
\medbreak

As an immediate consequence of the above Theorem one obtains the following result.

\begin{corollary}
\label{cAnticanonicalWild}
Let $S$ be an anticanonical rational surface endowed with a very ample line bundle $\cO_S(h)$. Then $S$ is Ulrich--wild if and only if $h^2\ge4$.
\end{corollary}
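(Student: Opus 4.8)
The plan is to deduce the corollary from Theorem \ref{tAnticanonical} and Theorem \ref{tWild}. By Theorem \ref{tAnticanonical} the polarisation $\cO_S(h)$ is non--special, and of course $p_g(S)=q(S)=0$, so the hypotheses of Theorem \ref{tWild} are met: $S$ is Ulrich--wild if and only if $\pi(\cO_S(h))\ge1$, or $\pi(\cO_S(h))=0$ and $h^2\ge5$. Hence the whole content of the corollary is that, for an anticanonical rational surface, this two--case criterion is equivalent to the single inequality $h^2\ge4$.

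To carry this out I would first translate $\pi(\cO_S(h))$ into numerical data about the embedding. Since $\chi(\cO_S)=1$, Equality \eqref{eqDimension} gives $\pi(\cO_S(h))=h^2-N+1$ with $N\ge2$, equivalently $hK_S=h^2-2N$; and since $-K_S$ is a non--zero effective divisor (a rational surface never has trivial canonical class) while $\cO_S(h)$ is ample, $h\cdot(-K_S)=-hK_S\ge1$, with sharper bounds available by decomposing $-K_S$ into irreducible curves and using that the anticanonical curve has arithmetic genus $1$ by adjunction. Combining these with Remark \ref{rh^2} (which upgrades $N\ge2$ to $N\ge3$ once $h^2\ge2$ and to $N\ge4$ once $h^2\ge4$) lets one control the admissible pairs $(h^2,\pi(\cO_S(h)))$; in particular $\pi(\cO_S(h))\le1$ whenever $h^2=4$, while $\pi(\cO_S(h))=0$ forces $S$ to have minimal degree, so by the description following Equality \eqref{eqDimension} either $S\cong\p2$ with $\cO_S(h)\cong\cO_{\p2}(\lambda)$, $\lambda\le2$, or $S\cong\bF_e$ with $\cO_S(h)\cong\cO_{\bF_e}(\xi+bf)$. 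The corollary then follows by splitting into the minimal--degree case, where one runs over these explicitly parametrised families and invokes Theorem \ref{tWild} directly, and the case $\pi(\cO_S(h))\ge1$, where Ulrich--wildness is automatic and one is left only to pin down the range of $h^2$.

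The step I expect to be the real obstacle is precisely this last reconciliation: collapsing the ``$\pi\ge1$, or $\pi=0$ and $h^2\ge5$'' dichotomy of Theorem \ref{tWild} to the clean threshold $h^2\ge4$ requires knowing exactly which degrees $h^2$ are realised by anticanonical rational surfaces in each of the two regimes, i.e. a careful analysis of $hK_S$ through the geometry of $|-K_S|$. All the remaining verifications are routine once this bookkeeping is settled.
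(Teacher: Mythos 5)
Your reduction is exactly the paper's: the printed proof is a one--liner observing that $\cO_S(h)$ is non--special (the first assertion of Theorem \ref{tAnticanonical}) and then invoking the wildness criterion (the reference to Theorem \ref{tStable} in the text is evidently a misprint for Theorem \ref{tWild}). So your strategy is the intended one. However, the step you honestly flag as ``the real obstacle'' --- collapsing the dichotomy ``$\pi(\cO_S(h))\ge1$, or $\pi(\cO_S(h))=0$ and $h^2\ge5$'' to the single threshold $h^2\ge4$ --- is not merely delicate: it cannot be carried out, because the two conditions are not equivalent on the class of anticanonical rational surfaces. In one direction, the anticanonically embedded cubic surface in $\p3$ has $h^2=3$ and $\pi(\cO_S(h))=1$, hence is Ulrich--wild by Theorem \ref{tWild} (and by the references quoted in the introduction), contradicting ``only if $h^2\ge4$''. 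In the other direction, the Veronese surface $\p2\subseteq\p5$ embedded by $\cO_{\p2}(2)$ and the quartic rational normal scrolls ($\bF_0$ with $h=\xi+2f$, $\bF_2$ with $h=\xi+3f$) are anticanonical rational surfaces with $h^2=4$ and $\pi(\cO_S(h))=0$, hence are \emph{not} Ulrich--wild by Theorem \ref{tWild}; the paper itself records the Veronese exception in Example \ref{eAlexander}.

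Consequently, if you run the case analysis you propose (minimal degree versus $\pi(\cO_S(h))\ge1$, using Remark \ref{rh^2} to control $hK_S$), you will end up refuting the stated equivalence rather than establishing it: the correct conclusion for anticanonical rational surfaces is just the unmodified dichotomy of Theorem \ref{tWild}, and the clean bound $h^2\ge4$ is an artifact of the $\p4$ situation of Example \ref{eAlexander}, where $N=4$ forces $\pi(\cO_S(h))=h^2-3$. Your proposal is therefore incomplete precisely at the step you identified, and no further bookkeeping on $h\cdot(-K_S)$ will close it without amending the statement (for instance by excluding the surfaces of minimal degree, or by replacing ``$h^2\ge4$'' with the criterion of Theorem \ref{tWild}).
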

\begin{proof}
The line bundle $\cO_S(h)$ is non--special, hence the statement follows from Theorem \ref{tStable}.
\end{proof}

The above Theorem \ref{tAnticanonical} is a generalisation of Theorem 1 of \cite{Kim}. In order to understand the results proved therein, we recall some definitions. The {\sl Clifford index of a line bundle $\mathcal L$} on a smooth curve $C$ is
$$
\Cliff(\mathcal L):=\deg(\mathcal L)-2h^0\big(C,\mathcal L\big)+2.
$$
The {\sl Clifford index of $C$} is 
$$
\Cliff(C):=\min\{\ \Cliff(\mathcal L)\ \vert\ \mathcal L\in\Pic(C), h^0\big(C,\mathcal L\big)\ge2,\ h^1\big(C,\mathcal L\big)\ge2\ \}
$$
Finally, $\mathcal L\in\Pic(C)$ {\sl computes the Clifford index} of $C$ if $\Cliff(C)=\Cliff(\mathcal L)$.

Using a construction due to Lazarsfeld and Mukai, the author proves in Theorem 1 of \cite{Kim} that for each curve $C\in\vert 3h+K_S\vert$ such that $\Cliff(C)$ is computed by $\cO_S(h+K_S)\otimes\cO_C$, then $S$ supports an irreducible family of dimension $h^2-K_S^2+5$  of rank $2$ stable special Ulrich bundles. 

Notice that if $\pi(\cO_S(h))=0$, then $\cO_S(h+K_S)\otimes\cO_C$ has degree
$$
3h^2+4hK_S+K_S^2=-h^2-8+K_S^2.
$$
The Enriques--Kodaira classification of minimal surfaces $S$ with $p_g(S)=q(S)=0$ (see the proof of Theorem \ref{tWild}) implies $K_S^2\le9$. Thus $-h^2-8+K_S^2\le 1-h^2\le 0$, i.e. $S\cong\p2$ and $\cO_S(h)\cong\cO_{\p2}(1)$. Since in this case $\vert 3h+K_S\vert=\emptyset$, it follows that the hypothesis in \cite{Kim} on $\Cliff(C)$ forces $\pi(\cO_S(h))\ge1$.

Thus the existence of  stable special Ulrich bundles proved under the technical hypothesis of Theorem 1 of \cite{Kim}, actually follows soon from Theorem \ref{tAnticanonical} which improves considerably the result in \cite{Kim} as the following two examples show.

\begin{example}
The following example extends Corollary 1 and Remark 6 of \cite{Kim}.

Consider a del Pezzo surface of degree $d\ge3$ which is the blow up of $\p2$ at $m=9-d\le 6$ general points. The linear system $\cO_S(h):=\cO_S(-K_S)$ is very ample and $\pi(\cO_S(h))=1$, hence the anticanonical embedding of $S$ supports stable special Ulrich bundles of rank $2$. Nevertheless, $\cO_S(h+K_S)\cong\cO_S$ does not compute the $\Cliff(C)$ for each $C\in\vert 3h+K_S\vert$. In particular Theorem \ref{tExistence} actually extends Theorem 1 of \cite{Kim}.

As pointed out in the introduction, it is well known that each del Pezzo surface with $d\ge3$ supports special Ulrich bundles of rank $2$ and is Ulrich--wild is well--known: see \cite{E--S--W}, Corollary 6.5 for the existence and \cite{PL--T} and \cite{MR--PL2} for the Ulrich--wildness. Notice that del Pezzo surfaces with $8\ge d\ge3$ also support many stable Ulrich bundles of rank $2$ which are not special (see \cite{Cs} for their classification).
\end{example}

\begin{example}
Let $S$ be the blow--up of $\p 2$ at $m\ge2$ general points on a cubic curve and let $p\colon S\to\p2$ be the blow up map. The surface $S$ is trivially anticanonical by definition. Moreover, thanks to Proposition at p.525 of \cite{G--H} its image in $\p{N}$ has not minimal degree, hence for each very ample line bundle $\cO_S(h)$ on $S$ one necessarily has $\pi(\cO_S(h))\ge1$ (see Equality \eqref{eqDimension}). Thus such an $S$ supports stable special Ulrich bundles of rank $2$. 

The Picard group of $S$ is freely generated by the pull--back $\ell$ via $p$ of a general line and by the exceptional  divisors $e_1,\dots,e_m$ of $\pi$. 
Let $h:=a\ell-\sum_{i=1}^me_i$ with $m\le9$. \cite{DA--H}, Th\'eor\ga eme 2.3 implies that $\cO_S(h)$ is very ample when $a\ge5$. When $a=4$ the same result holds, because the linear system induced on the blow up at $10$ points by quartics through those points is very ample (e.g. see \cite{Al}). We have
$$
\vert 3h+K_S\vert=\vert3(a-1)\ell-2\sum_{i=1}^me_i\vert,\qquad \vert h+K_S\vert=\vert(a-3)\ell\vert.
$$
Then $\pi(\cO_S(h))$ and the genus $g$ of a smooth $C\in\vert 3h+K_S\vert$ satisfy
$$
\pi(\cO_S(h))={{a-1}\choose2}\ge1,\qquad g=\frac{(3a-4)(3a-5)}2-m\ge4
$$
for $a\ge4$ and $m\le 9$. In particular, both $3a-5$, and $(h+K_S)C=3(a-1)(a-3)$ are strictly smaller than $g-1$ for $a\ge4$ and $m\le 9$. 

On the one hand $C$ has a $g^1_{3a-5}$ cut out by the lines through any of the blown up points. Thus the $\Cliff(C)\le 3a-7$. On the other hand, $p_*\cO_S\cong\cO_{\p2}$ and $R^ip_*\cO_S=0$ for $i\ge1$, hence the projection formula yields $h^i\big(S,\cO_S(t\ell)\big)=h^i\big(\p2,\cO_{\p2}(t)\big)$ for each integer $t$. 

We trivially have $h^0\big(S,\cO_S(h+K_S-C)\big)=0$. Moreover, we also have $h^1\big(S,\cO_S(h+K_S-C)\big)=h^1\big(S,\cO_S(C-h)\big)=0$ because $\vert C-h\vert$ is non--special for the general choice of the blown up points.

It follows from the cohomology of sequence
$$
0\longrightarrow\cO_S(h+K_S-C)\longrightarrow\cO_S(h+K_S)\longrightarrow\cO_S(h+K_S)\otimes\cO_C\longrightarrow0
$$
that the $\Cliff(\cO_S(h+K_S)\otimes\cO_C)=(2a-3)(a-3)$ which is strictly greater than $3a-7$ when $a\ge5$ (and coincides with it if $a=4$). We conclude that the polarised surface $S$ does not satisfy the hypothesis of Theorem 1 of \cite{Kim} when $a\ge5$.
\end{example}

\section{Acknowledgments}
The author is a member of GNSAGA group of INdAM and is supported by the framework of PRIN 2015 \lq Geometry of Algebraic Varieties\rq, cofinanced by MIUR.

The author is particularly indebted with the referee for her/his criticisms, questions, remarks and suggestions which have considerably improved the whole exposition. Finally, the author also thanks the editor of the International Journal of Mathematics, Professor Oscar Garcia--Prada.

\bigskip
\noindent
Gianfranco Casnati,\\
Dipartimento di Scienze Matematiche, Politecnico di Torino,\\
c.so Duca degli Abruzzi 24, 10129 Torino, Italy\\
e-mail: {\tt gianfranco.casnati@polito.it}

\end{document}